\crefname{hypothesis}{Hypothesis}{Hypotheses}
\title{Robins-Monro Augmented Lagrangian Method for Stochastic Convex Optimization \thanks{This version: September 1, 2022.
\funding{This work was supported in part by the National Key R\&D Program of China 2021YFA1000300, 2021YFA1000301, the National Natural Science Foundation of China (No. 12071464), and the Beijing Natural Science Foundation (Z190002).}}}
\author{Rui Wang\thanks{
		(\email{wangrui2020@amss.ac.cn}, \email{dingchao@amss.ac.cn}).}
	\and Chao Ding\footnotemark[2]}
\author{Rui Wang\thanks{Institute of Applied Mathematics, Academy of Mathematics and Systems Science, Chinese Academy of Sciences, Beijing, P.R. China, School of Mathematical Sciences, University of Chinese Academy of Science, Beijing, P.R.
		China.
  (\email{wangrui2020@amss.ac.cn}).}
\and Chao Ding 
\thanks{Institute of Applied Mathematics, Academy of Mathematics and Systems Science, Chinese Academy of Sciences, Beijing, P.R. China.
  (\email{dingchao@amss.ac.cn}).}
}
\newcolumntype{R}{>{$}r<{$}} %
\newcolumntype{V}[1]{>{[\;}*{#1}{R@{\;\;}}R<{\;]}} %
\newtheorem{assumption}{Assumption}[section]
\begin{document}

\maketitle

\begin{abstract}
In this paper,  we propose a Robbins-Monro augmented Lagrangian method (RM-ALM) to solve a class of constrained stochastic convex optimization, which can be regarded as a hybrid of the Robbins-Monro type stochastic approximation method and the augmented Lagrangian method of convex optimizations. Under mild conditions, we show that the proposed algorithm exhibits a linear convergence rate. Moreover, instead of verifying a computationally intractable stopping criteria, we show that the RMALM with the increasing subproblem iteration number has a global complexity $\mathcal{O}(1/\varepsilon^{1+q})$ for the $\varepsilon$-solution  (i.e., $\mathbb{E}\left(\|x^k-x^*\|^2\right) < \varepsilon$), where $q$ is any positive number. Numerical results on synthetic and real data demonstrate that the proposed algorithm outperforms the existing algorithms.
\end{abstract}


\begin{keywords}
stochastic convex optimization, Robbins-Monro augmented Lagrangian method, stochastic approximation, {total iteration complexity}
\end{keywords}

\begin{MSCcodes}
90C15, 90C52, 90C25, 90C06, 65K05 
\end{MSCcodes}
{\small 
\section{Introduction}\label{intro}

In this paper, we consider the following stochastic convex optimization:
\begin{equation}\label{nlp}
	\begin{aligned}
		&\min _{x \in X }\  f(x) = f_0(x)+ f_1(x), \quad  f_1(x):=\mathbb{E}_{\xi}\left(F(x , \xi)\right) ,\\
		&\text { s.t. }\  h(x):=(h_{1}(x),\ldots,h_{M}(x))^T \leq 0,
	\end{aligned}
\end{equation}
where $X \subseteq \mathbb{R}^{n}$ is a nonempty convex and compact set, $\xi$ denotes the random variable whose distribution $P$ is supported on sample space $\Omega$ and $\mathbb{E}_{\xi}$ is the expectation with respect to $\xi$, the continuously  differentiable functions $f_0: \mathbb{R}^n\mapsto \mathbb{R}$ and $h_j: \mathbb{R}^n \mapsto \mathbb{R}$, $j=1,\ldots,M$ are convex with respect to $x$, and the continuously  differentiable function $F: \mathbb{R}^n \times \Omega \mapsto \mathbb{R}$ is convex with respect to $x$ for {almost sure} $\xi \in \Omega$. In addition, we assume that the random variable $\xi$ is independent of $x$, which implies that {\cref{nlp} is a stochastic convex optimization. When $\xi$ is distributed uniformly on a finite set $\{\xi_{1},\ldots,\xi_{N}\}$, the problem \cref{nlp} reduces to the following convex optimization involving the finite-sum objective function:
	\begin{equation}\label{nlp2}
		\begin{aligned}
			&\min _{x \in X }\  f(x) = f_0(x)+ \frac{1}{N}\sum_{i=1}^{N}F(x , \xi_i) ,\\
			&\text { s.t. }\  h(x) \leq 0.
		\end{aligned}
	\end{equation}
	
	The stochastic convex optimization \cref{nlp} appears widely in a variety of applications, including the portfolio optimization \cite{rockafellar2000optimization}, the multi-stage stochastic optimization \cite{pflug2014multistage}, and constrained deep neural networks \cite{chen2018constraint}.  Below we give a few concrete examples and more applications of the stochastic convex optimization \cref{nlp} can be found from \cite{birge1997state,shapiro2003monte,shapiro2021lectures}.
	
	\textbf{Stochastic convex quadratically constrained quadratic program (QCQP)}. Consider the following stochastic convex QCQP:
	\begin{equation} \label{qcqpe}
		\begin{aligned}
			&\min _{{x} \in X}\ f(x)= \mathbb{E}\big(\frac{1}{2} \left\|\xi_{H} x-\xi_{c}\right\|^{2}\big),\\
			&\text { s.t. } h_j(x)=\frac{1}{2} x^{\top} Q_{j} x+a_{j}^{\top} x \leq b_{j},\ j=1, \ldots, M,
		\end{aligned}
	\end{equation} 
	where $\xi:=(\xi_H,\xi_{c})$ with $\xi_H\in \mathbb{R}^{p\times n}$ and $\xi_{c} \in \mathbb{R}^p$ are random variables, and the symmetric positive semidefinite matrices $Q_{j}\in \mathbb{R}^{n\times n}$, $a_j\in\mathbb{R}^n$, and $b_{j}\in\mathbb{R}$, $j=1,\ldots,M$ are deterministic. Clearly, this stochastic QCQP is of the form of \cref{nlp}.
	
	\textbf{Two-stage stochastic program}. Consider the following two-stage stochastic program:
	\begin{equation}\label{first}
		\min\limits_{x \in X}\  f\left(x\right)=f_{0}\left(x\right)+f_1\left(x\right) ,
	\end{equation}
	with $X \subset \mathbb{R}^{n}$ a nonempty convex and compact set, $f_0$ is a convex continuously differentiable function, and $f_1\left(x\right)=\mathbb{E}_{\xi}\left(F\left(x, \xi\right)\right)$ where  $\xi:=(\xi_f,\xi_g,\xi_A,\xi_B,\xi_b)$ with the random  matrices $\xi_A\in\mathbb{R}^{d\times n}$ and $\xi_B\in\mathbb{R}^{d\times m}$, and random vectors $\xi_f\in\mathbb{R}^p$, $\xi_g\in\mathbb{R}^q$ and $\xi_b\in \mathbb{R}^{d}$ summarizes all the random variables involved in the second stage: 
	\begin{equation}\label{second} 
		\begin{array}{rl}
			F\left(x , \xi\right):= \min\limits_{y\in Y} & f_{2}\left(x, y, \xi_f\right) \\
			\text{s.t.}&  \xi_A x+\xi_B y=\xi_b,\quad  g\left(x, y,\xi_g\right) \leq 0,
		\end{array}
	\end{equation}
	where $Y \subset \mathbb{R}^{m}$ is a nonempty convex and compact set, $f_2$ and $g$ are continuously  differentiable functions and  jointly convex  with respect to the first stage decision variable $x$ and the second stage decision variable $y$.  One of the method for solving the two-stage stochastic program \eqref{first} is the sample average approximation (SAA) method \cite{kleywegt2002sample}, see \cite{shapiro2021lectures} for more details. By sampling $\xi_{1},\xi_{2},\ldots,\xi_{N}$ in the distribution to approximate  $\mathbb{E}_{\xi}\left(F\left(x, \xi\right)\right) $, we can rewrite the two-stage stochastic problem \eqref{first} as the following convex optimization involving the finite-sum objective function:  
	\begin{equation}\label{single}
		\begin{aligned}
			&\min\limits_{x\in X,y_1,\ldots,y_N\in Y}\  f_{0}\left(x \right) + \frac{1}{N}\sum_{i=1}^{N} f_{2}\left(x, y_{i}, {\xi_f}_{i}\right)\\
			&\quad\quad\ \ \text{s.t.}\quad   {\xi_A}_i x+{\xi_B}_i y_i ={\xi_b}_i,\ g\left(x,y_i, {\xi_g}_i\right) \leq 0,\quad i=1,2,\ldots,N,
		\end{aligned}
	\end{equation}
	where $y_i$ represents the second stage decision corresponding to $\xi_i$. Thus, we may approximately solve the two-stage stochastic problem \eqref{first} by considering a stochastic convex optimization in the form of \cref{nlp2}.
	
	\textbf{Stochastic portfolio optimization}. The third motivating example is the portfolio optimization problem involving Conditional Value at Risk (CVaR). In a fundamental work \cite{rockafellar2000optimization}, Rockafellar and Uryasev show that a class of asset allocation problems can be modeled as:
	\begin{equation}\label{cvar}
		\mathrm{CVaR}=\min_{a,x\in X}\big\{a+\frac{1}{1-p} E\left([f(x,\xi)-a]_{+}\right)\big\},
	\end{equation}
	where $f$ is the loss associated with the decision vector $x$, to be chosen from a certain subset $X$ of $\mathbb{R}^n$, and the random vector $\xi$ in $\mathbb{R}^m$, $p\in(0,1)$ is a safety (reliability) level chosen by users, $a$ is a threshold of loss $f$. {When the return on a portfolio $x$ is the sum of the returns on the individual instruments in the portfolio, scaled by the proportions $x_i$. The loss is the negative of return and can be denoted as $f(x,\xi):= -(\xi_{1}x_1+\cdots+\xi_{n}x_n) = -\xi^Tx$.}
	Let $m:=E(\xi)$ be the average return for each asset assumed to be known (or estimated). We have $\mathbb{E}\left(\xi^Tx\right) \geq R \Rightarrow m^{T}x  \geq R$, where $R$ encodes a minimum desired return. The feasible set of portfolios can be written as 
	\begin{equation}\label{set}
		X=\big\{x \in \mathbb{R}^{n} \mid \sum_{i=1}^{n} x_{i}=1, x \geq 0,-m^{T} x \leq-R \big\} ,	
	\end{equation}
	which is a nonempty convex and compact set. To minimize \cref{cvar} concerning $x$ and $a$, we approximate the expectation in \cref{cvar} by sampling $\xi_{1},\xi_{2},\ldots,\xi_{N}$ with respect to the distribution of $\xi$, then we obtain the approximate problem:
	\begin{equation}\label{cvar2}
		\min_{a,x\in X}\ a+\frac{1}{(1-p)N} \sum_{i=1}^{N}[-\xi_i^Tx-a]_{+}.
	\end{equation}
	Introducing auxiliary variables $y_i$ for $i =1,2,\ldots,N$, it is equivalent to minimizing the problem:
	\begin{equation} \label{cvar3}
		\begin{aligned}
			\min _{a, x \in X, y}\ &a+\frac{1}{(1-p)N} \sum_{i=1}^{N} y_{i}\\
			\mbox{s.t.}\quad  &y_{i} \geq -\xi_i^Tx-a, \quad y_{i} \geq 0, i=1, \ldots, N.
		\end{aligned}
	\end{equation}
	This problem can be reduced to \cref{nlp2}.
	
	The stochastic approximation (SA) is an efficient approach to solving the  {unconstrained stochastic convex optimization}. It is firstly proposed by Robbins and Monro \cite{robbins1951stochastic} in 1951 for  {strongly convex unconstrained stochastic optimization}. Recently, the SA-type algorithms have become popular even beyond the optimization community. This trend can be credited to some extent to the exciting developments in emerging fields such as machine learning  \cite{bottou2018optimization,pflug2012optimization,xiao2014proximal}. In terms of convergence analysis, for the stochastic convex  {optimization} with easy projection constraint,  Nemirovski et al. \cite{nemirovski2009robust} show that, under the assumption of strong convexity, the SA algorithm exhibits a rate of convergence $\mathcal{O}(1/\varepsilon)$, i.e. after $\mathcal{O}(1/\varepsilon) $ iterations, it holds that $\mathbb{E}\left(\|x^k-x^*\|^2\right) < \varepsilon$, where $x^k$ is the $k$-th iterate, and $x^*$ is the optimal point. 
	
	Recently, Lan and Zhou \cite{lan2020algorithms} extend the stochastic approximation idea to the {stochastic convex optimization} with the single deterministic/stochastic constraint  and propose the cooperative stochastic approximation (CSA) method. In \cite{lan2020algorithms},  Lan and Zhou show that the CSA method has the  $\mathcal{O}(1/\varepsilon^2)$ rate of convergence for both optimality gap and constraint violation, where $\varepsilon$ denotes the optimality gap and infeasibility. Moreover, when the objective function and constraint are both strongly convex, the rate of convergence of the CSA can be improved to $\mathcal{O}(1/\varepsilon)$. Then, Basu and Nandy \cite{basu2019optimal} extend it to the stochastic convex optimization with multiple constraints. Yu et al. \cite{yu2017online} propose an online algorithm for constrained stochastic convex optimization which achieves $\mathcal{O}(1/\sqrt{k})$ expected regret and constraint violation (see \cite[Thm 1 and 2]{yu2017online}). Moreover, Nemirovski et al. \cite{nemirovski2009robust} propose the saddle-point mirror stochastic approximation (MSA) to solve the convex-concave stochastic saddle-point problems. It includes the constrained stochastic convex optimization as a special case if certain constraint qualifications hold and achieves an ergodic convergence rate $\mathcal{O}(1/\sqrt{k})$ in terms of the primal-dual gap. The convergence analysis of the above methods mainly focuses on the objective function value and constraint violation. For the iterative sequence convergence property, the penalized stochastic gradient (PSG) method proposed by Xiao \cite{xiao2019penalized} owns  {the} convergence rates $\mathcal{O}(1/k^{1/4})$ about $\mathbb{E}(\|\bar{x}^k-x^*\|^2)$ under the restricted strong convexity assumption, where $\bar{x}^k$ is obtained by weighted average. For the general constrained stochastic convex optimization problem with the deterministic/stochastic constraints, Boob et al. \cite{Boob2021} propose a primal-dual proximal gradient based method, so-called constraint extrapolation (ConEx) method, in which the linear approximations of the constraint functions are used to define the extrapolation (or acceleration) step. Under the strong convexity assumption on the objective function, we know from \cite[Theorem 1]{Boob2021} that the ConEx method exhibits  a $\mathcal{O}(1/\varepsilon)$ rate of convergence in terms of $\mathbb{E}(\|x^k-x^*\|^2)$ for solving the stochastic convex optimization problem \eqref{nlp}. In this paper, we propose a Robbins-Monro augmented Lagrangian method for the stochastic convex optimization \cref{nlp}, which can be regarded as a hybrid of the stochastic approximation and traditional augmented Lagrangian method. Similar as the ConEx method proposed by \cite{Boob2021}, our augmented Lagrangian based algorithm exhibits a $\mathcal{O}(1/\varepsilon^{1+q})$ rate of convergence in terms of $\mathbb{E}(\|x^k-x^*\|^2)$ provided only the strong convexity of the objective function, where $q>0$ is an arbitrarily given number.  
	
	The classical augmented Lagrangian method (ALM) proposed by  Hestenes \cite{hestenes1969multiplier} and Powell \cite{powell1969method} as an algorithm for  {constrained optimization}. Recently, the ALM has been recognized as an efficient method for solving many optimization problems \cite{zhao2015newton,li2018highly,zhou2021semi}, due to its fast linear convergence rate \cite{rockafellar1976augmented,cui2017quadratic,cui2019r}. Furthermore, it also has some combinations for the constrained stochastic convex optimization. Zhang et al. \cite{2021Solving} propose a stochastic augmented Lagrangian-type algorithm named stochastic linearized proximal method of multipliers (SLPMM) for stochastic convex optimization, which achieves the complexity of $\mathcal{O}(1/\varepsilon^2)$ for objective reduction and constraint violation. The SLPMM builds small-scale constrained optimization problems by simultaneously sampling the objective function and constraints, then continuously solving the sampled small-scale optimization problem approximates the solution of the stochastic convex optimization. It is worth noting that the same technique has been applied in the other stochastic proximal point algorithm (SPPA) based algorithms. In fact, both Ryu and Boyd \cite{ryu2014stochastic}, and Milzarek et al. \cite{milzarek2022semismooth} propose to construct the small-scale problem for the sampled objective function and solve it sequentially by proximal point algorithm (PPA) to obtain the  {solution of original problem}. However, for the theoretical analysis of SLPMM, authors assume that the sampled subproblem can achieve the exact solution, which may not be satisfied in practice. Xu \cite{xu2020primal} also proposes an ALM-related method, the primal-dual stochastic gradient method, which alternatively decreases the primal and dual variables in the augmented Lagrangian function. It achieves the $\mathcal{O}(1/\sqrt{k})$ convergence rate of objective reduction and constraint violation for convex case and nearly $\mathcal{O}(\log(k)/k)$ rate for strongly convex case with $\mathbb{E}(\|x^k-x^*\|^2)=\mathcal{O}(\log(k)/k)$.
	
	Traditional inexact ALM controls the accuracy of subproblem solutions by suitable stopping criteria, resulting in the linear or asymptotic superlinear convergence rates \cite{rockafellar1976augmented}. However, verifying the stopping criteria for stochastic convex optimization is usually computationally expensive or even intractable due to the expectations. In order to solve \cref{nlp} and explore the convergence of inexact ALM without stopping criteria, we first analyze the convergence of the ALM with random perturbations called stochastic augmented Lagrangian method (SALM). Then, we propose a Robbins-Monro augmented Lagrangian method (RM-ALM) to solve the constrained stochastic convex optimization \cref{nlp} which is inspired by Robbins-Monro proximal point algorithm (RMPPA) \cite{toulis2021proximal}. In the RMPPA, Toulis et al. \cite{toulis2021proximal} study the unconstrained stochastic convex optimization by the PPA, which can be viewed as a dual method of the ALM \cite{rockafellar1976augmented}. It enlightens us to solve the constrained stochastic convex optimization by the ALM. However, the RMPPA uses the Robbins-Monro method to solve the PPA subproblem, and its convergence requires the  {subproblem iteration number} to converge to infinity consistently, which is usually  {unpractical} even for most applications. By the adequately chosen step size and  {subproblem iteration number}, we successfully overcome this shortage in the proposed RMALM.
	
	The RMALM can be viewed as a hybrid of the ALM and the Robbins–Monro type method. In the RMALM, we use Robbins–Monro type method to minimize the augmented Lagrangian function at each iteration of ALM, and each inexact solution can be viewed as a perturbation of the exact solution. Instead of verifying a stopping criteria, which is intractable computationally for stochastic optimizations, we show that the RMALM with the  {increasing subproblem iteration number} has a global complexity $\mathcal{O}(1/\varepsilon^{1+q})$ for the $\varepsilon$-solution  (i.e., $\mathbb{E}\left(\|x^k-x^*\|^2\right) < \varepsilon$), where $q$ is any positive number. The contributions of this paper exist in the following several aspects. 
	\begin{itemize}
		\item[1)] We obtain the almost sure convergence of the stochastic augmented Lagrangian method (SALM). By introducing random noise to the  ALM subproblem solution, the convergence properties of the results are analyzed from a probabilistic viewpoint.  
		
		\item[2)] We design a novel Robbins-Monro augmented Lagrangian method with a convergence rate arbitrarily close to $\mathcal{O}(1/\varepsilon)$. We use a stochastic algorithm to solve the ALM subproblem and avoid the verification of the stopping criteria, which is more practical. Under the strongly convex assumption of the objective function, we obtain the total iteration complexity $\mathcal{O}(1/\varepsilon^{1+q})$, where $q$ is any positive number. It is arbitrarily close to $\mathcal{O}(1/\varepsilon)$. The complexity is shown to be comparable with the existing related stochastic methods. 
		 
		\item[3)] We show the practical performance of the proposed algorithm by testing it on the stochastic convex QCQP, a two-stage stochastic program, and a stochastic portfolio optimization problem. We compare the proposed method to the CSA method in \cite{lan2020algorithms}, the MSA method in \cite{nemirovski2009robust}, the PDSG-adp method in \cite{xu2020primal} and the APriD method in \cite{yan2022adaptive}. The numerical results demonstrate that RMALM can decrease the objective value, the constraint violation, and the parameter error faster than other methods in terms of iteration number and running time.
		
	\end{itemize}
	
	The paper is organized as follows. Some preliminaries and the analysis of the SALM are in Section \ref{sec:pre}, the algorithm framework and convergence results are in Section \ref{sec:alg}, the numerical experiments are in Section \ref{test}, and the conclusions follow in Section \ref{conclusion}.

\section{SALM: Stochastic augmented Lagrangian method}
\label{sec:pre}
 Denote $\mathbb{R}_+^M:=\{z\in \mathbb{R}^M\mid z\geq0\}$ and $\mathbb{R}_{++}^M:=\{z\in \mathbb{R}^M\mid z>0\}$. For any given $x\in\mathbb{R}^n$, $y\in\mathbb{R}_+^M$ and $c>0$, the augmented Lagrangian function $L(x,y,c)$ of \eqref{nlp} takes the following form:
  {
 \begin{equation}\label{eq:def-ALF}
 	L(x, y, c):= f(x)  +\frac{c}{2}\big\|\big(h(x)+\frac{y}{c}\big)_+\big\|^2-\frac{1}{2c}\left\|y\right\|^2,
 \end{equation}
}
where $(\cdot)_+$ is the metric projection operator over the non-negative orthant. It is clear that the augmented Lagrangian function $L$ is convex about $x$.  Let $(x^0, y^0) {\in \mathbb{R}^n\times\mathbb{R}_+^M}$ be a given initial point.  For  {the} $k$-th iteration, the SALM for \eqref{nlp} is defined as follows 
 \begin{subequations}\label{alm}
 	 \begin{align} 
 		&\hat{x}^{k+1} = \arg \min\limits_{x\in X} L(x, y^{k}, c^{k}),\label{alma}\\
 		&x^{k+1}   =  \hat{x}^{k+1}  -   c^k \epsilon^{k+1}, \label{almb}  \\
 		&y^{k+1} = \max\left\{0,y^{k} + c^k h(x^{k+1})\right\},\label{almc} 
 	\end{align} 
 \end{subequations}
 where $h(x)=(h_1(x),\ldots,h_M(x))^T$ and $c^k>0$ is given.
%
Let  $l:\mathbb{R}^n\times\mathbb{R}^M\to  {[-\infty,\infty]}$ be the ordinary Lagrangian function for \cref{nlp}, i.e.,
\begin{equation}\label{lagrange function}
	l(x,y) = \left\{
	\begin{array}{ll}
		 f(x)+ \left\langle h(x),y\right\rangle, &   \mbox{if $x\in X$ and $y\in \mathbb{R}_+^M$},\\
		 -\infty, &   \mbox{if $x\in X$ and $y\notin \mathbb{R}_+^M$}, \\
		\infty,  &   \mbox{if $x\notin X$}.
	\end{array}
	\right.
\end{equation}
Thus, for \cref{nlp}, the primal essential objective function $p:\mathbb{R}^n\to (-\infty,\infty]$ and the dual essential objective function $g:\mathbb{R}^M\to [-\infty,\infty)$ take the following forms, respectively: 
\begin{equation}\label{essential function}
	p(x) :=\displaystyle{\sup_{y\in \mathbb{R}^M}} l(x,y)  \quad \mbox{and} \quad g(y) := \inf\limits_{x\in \mathbb{R}^n} l(x,y).
\end{equation}
It is clear from \cite{rockafellar1970convex} that $l$ is a closed saddle-function. Since the convexity, continuity of $f$, $g$ and compactness of $X$, we know from \cite[Corollary 37.5.2]{rockafellar1970convex} that the mapping 
\begin{equation}\label{eq:def-Tl}
T_l:(x,y)\rightarrow\{(v,u)\mid(v,-u)\in\partial l(x,y)\}
\end{equation} 
is a maximal monotone operator in $\mathbb{R}^{n+M}$. Define the inverse of $T_l$ as a mapping $T_l^{-1}:(v,u)\rightarrow\{(x,y)\mid (v,u)\in T_l(x,y)\}$. The following definition on the Lipschitz continuity of the inverse of a maximal monotone operator is taken from \cite{rockafellar1976augmented}.

\begin{definition}\label{def:T-inv-Lip}
	For a maximal monotone operator $T$ from a finite dimensional linear vector space $\mathcal{Z}$ to itself, we say that its inverse $T^{-1}(w):=\{z\in\mathcal{Z}\mid w\in T(z) \}$, $w\in \mathcal{Z}$ is Lipschitz continuous at the origin with modulus $a \ge 0$  if there is a unique solution $\hat{z}$ to $z=T^{-1}(0)$, and for some $\tau>0$ we have $\|z-\hat{z}\| \leq a\|w\|$ whenever $ z \in T^{-1}(w)$ and $\|w\| \leq \tau$. 
\end{definition}

The dual essential objective function $g$ is a proper closed concave function on $\mathbb{R}^M$. Thus, the mapping $T_g:=-\partial g$ is a maximal monotone operator and the dual optimal solutions is given by $T_g^{-1}(0):=\{y\in\mathbb{R}^M\mid 0\in T_g(y)\}$. It is well-known (cf. e.g., \cite{rockafellar1976augmented}) that the exact ALM for \eqref{nlp} is equivalent with the following dual proximal point algorithm (PPA): for each $k$,
\begin{equation}\label{ppa}
	\hat{y}^{k+1}=P^{k}(y^k)=\left(I+c^{k} T_{g}\right)^{-1}(y^k)=\arg \max _{y \in \mathbb{R}^{M}}\left\{g(y)-\left(1 / 2 c^{k}\right)\left\|y-y^{k}\right\|^{2}\right\},
\end{equation} 
i.e.,
\begin{equation}\label{alb}
	\hat{y}^{k+1}:=\max\left\{0,y^{k} + c^k h(\hat{x}^{k+1})\right\}, 
\end{equation}
where $\hat{x}^{k+1}$ is given by \eqref{alma}.

{Next, we introduce some assumptions on the stochastic convex optimization \cref{nlp}, which will be used in the subsequent almost sure  convergence analysis of the SALM.}




\begin{assumption}\label{ass3}
	The convex objective function $f$ of \cref{nlp} is strictly convex for all $x \in X$.
\end{assumption}

 The following assumption is natural and reasonable, since the set $X$ is compact and the function $h$ is continuous.
\begin{assumption}\label{ass2}
	The constraint function $h$ in \cref{nlp} is Lipschitz continuous over $X$ with parameter $L_h>0$, i.e.,
	\[
	\left\| h \left(x\right) - h \left(y\right)\right\| \leq L_{h}\left\|x-y\right\| \quad \forall\, x,y\in X.
	\]
\end{assumption}

%
%



It follows from (\ref{essential function}) that the dual essential objective function  $g$ is given by
\[
g(y)= \inf\limits_{x\in \mathbb{R}^n} l(x,y)  = \left\{ \begin{array}{ll}
	\inf\limits_{x\in X} \big\{ f(x) + \left\langle h(x),y\right\rangle\big\} & \mbox{if $y\in \mathbb{R}^M_+$,}\\ 
	-\infty &  \mbox{otherwise,}
\end{array} \right. \quad y\in\mathbb{R}^M.
\]  
It is clear that $l$ is continuously differentiable on $X\times \mathbb{R}_{+}^M$. Since $h_j$, $j=1,\ldots,M$ are convex and $X$ is compact, under \cref{ass3}, we know from the continuity of $l$ that for any $y\in \mathbb{R}_+^M$, $\inf\limits_{x\in X} l(x,y)$ has the unique solution $x(y)$. Moreover, we know from  \cite[Theorem 9]{1967Lagrange}, the optimal solution mapping $x(\cdot):\mathbb{R}_+^M\to \mathbb{R}^n$ is continuous. Therefore, it is clear that the corresponding dual essential objective function $g(y)= f(x(y))+\left\langle h(x(y)),y\right\rangle$, $y\in \mathbb{R}_+^M$ is also continuous. The following lemma is on the subdifferential \cite{rockafellar1970convex} of the concave function $g$.
\begin{lemma}\label{lemma1}
	For each $k$, let $\hat{y}^k$ be given by \eqref{alb}. Suppose \cref{ass3} holds. Then, we have that for each $k$,
	\begin{equation}\label{gradient}
	 x(\hat{y}^k)=\hat{x}^k\quad\mbox{and}\quad h(\hat{x}^k) \in \partial g(\hat{y}^k),
	\end{equation}
where $\hat{x}^k$ is given by \eqref{alma}.
\end{lemma}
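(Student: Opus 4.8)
The plan is to exploit the exact first-order correspondence between the augmented Lagrangian subproblem \eqref{alma} and the ordinary Lagrangian evaluated at the updated multiplier \eqref{alb}. It is cleanest to argue for a generic step: write $\hat{x}$ for a minimizer of $L(\cdot,y,c)$ over $X$ and set $\hat{y}:=\max\{0,\,y+c\,h(\hat{x})\}$ for the associated dual update, and then specialize to each $k$ at the end. First I would differentiate the penalty term in \eqref{eq:def-ALF}. Since $t\mapsto\tfrac12(t)_+^2$ is continuously differentiable with derivative $(t)_+$, the chain rule together with $c\,(a)_+=(ca)_+$ for $c>0$ gives
\begin{equation*}
\nabla_x L(x,y,c)=\nabla f(x)+\sum_{j=1}^{M}\big(y_j+c\,h_j(x)\big)_+\nabla h_j(x).
\end{equation*}
The key observation is that the scalar coefficient $\big(y_j+c\,h_j(x)\big)_+$ evaluated at $x=\hat{x}$ is precisely the $j$-th component $\hat{y}_j$ of the update \eqref{alb}. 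Hence $\nabla_x L(\hat{x},y,c)=\nabla f(\hat{x})+\sum_j\hat{y}_j\nabla h_j(\hat{x})=\nabla_x l(\hat{x},\hat{y})$.

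Next I would write the variational inequality characterizing $\hat{x}$ as the minimizer of the convex function $L(\cdot,y,c)$ over the convex set $X$, namely $\langle\nabla_x L(\hat{x},y,c),\,x-\hat{x}\rangle\ge 0$ for all $x\in X$, and substitute the identity just derived to get $\langle\nabla_x l(\hat{x},\hat{y}),\,x-\hat{x}\rangle\ge 0$ for all $x\in X$. Because $\hat{y}\in\mathbb{R}_+^M$, this is exactly the optimality condition for $\min_{x\in X}l(x,\hat{y})$. Under \cref{ass3} the map $x\mapsto l(x,\hat{y})=f(x)+\langle h(x),\hat{y}\rangle$ is strictly convex (strict convexity of $f$ is preserved by adding the convex term $\langle h(\cdot),\hat{y}\rangle$), so its minimizer over $X$ is unique and equals $x(\hat{y})$. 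This yields $\hat{x}=x(\hat{y})$, and applied to each iteration it gives the first assertion $x(\hat{y}^k)=\hat{x}^k$.

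For the second assertion I would invoke the envelope (Danskin-type) inequality for the infimal value $g(y)=\inf_{x\in X}\{f(x)+\langle h(x),y\rangle\}$. For any $y'\in\mathbb{R}_+^M$, using that $x(\hat{y})$ is feasible but generally suboptimal for $y'$,
\begin{equation*}
g(y')\le f(x(\hat{y}))+\langle h(x(\hat{y})),y'\rangle=g(\hat{y})+\langle h(x(\hat{y})),\,y'-\hat{y}\rangle,
\end{equation*}
where the equality uses $g(\hat{y})=f(x(\hat{y}))+\langle h(x(\hat{y})),\hat{y}\rangle$. For $y'\notin\mathbb{R}_+^M$ the inequality holds trivially since $g(y')=-\infty$, so it is valid on all of $\mathbb{R}^M$; this is precisely the concave-subgradient inequality certifying $h(x(\hat{y}))=h(\hat{x})\in\partial g(\hat{y})$. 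Specializing gives $h(\hat{x}^k)\in\partial g(\hat{y}^k)$.

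I expect the main (though modest) obstacle to be the careful gradient computation for the penalty term: one must justify differentiability of $x\mapsto\tfrac{c}{2}\|(h(x)+y/c)_+\|^2$ across the kink of $(\cdot)_+$ and verify that the factor-of-$c$ bookkeeping reproduces exactly the multiplier update $\hat{y}=\max\{0,\,y+c\,h(\hat{x})\}$ rather than a rescaled version. Once this identity is in hand, matching the two variational inequalities and invoking uniqueness from \cref{ass3} are routine, and the envelope inequality for the second claim is immediate.
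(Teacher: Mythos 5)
Your proof is correct, and while its first half follows the paper, its second half takes a genuinely different and simpler route. The identification $x(\hat{y}^k)=\hat{x}^k$ via the gradient identity $\nabla_x L(\hat{x},y,c)=\nabla f(\hat{x})+\sum_j\big(y_j+c\,h_j(\hat{x})\big)_+\nabla h_j(\hat{x})=\nabla_x l(\hat{x},\hat{y})$, followed by matching the variational inequalities over $X$ and invoking uniqueness from \cref{ass3}, is essentially the paper's own argument around \eqref{kkt}. Where you diverge is the subgradient claim $h(\hat{x}^k)\in\partial g(\hat{y}^k)$: the paper first establishes differentiability of $g$ on the open orthant $\mathbb{R}_{++}^M$ via Danskin's theorem (its \eqref{gyy}) and then extends to boundary points of $\mathbb{R}_+^M$ by the limiting argument \eqref{subdiff}, which leans on the continuity of the solution map $x(\cdot)$ (imported from a cited stability theorem) and the continuity of $g$. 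You instead use the one-line envelope inequality $g(y')\le f(x(\hat{y}))+\langle h(x(\hat{y})),y'\rangle$ for $y'\in\mathbb{R}_+^M$, combined with attainment $g(\hat{y})=f(x(\hat{y}))+\langle h(x(\hat{y})),\hat{y}\rangle$, which is exactly the concave subgradient inequality; the case $y'\notin\mathbb{R}_+^M$ is trivial since $g(y')=-\infty$. This is both more elementary and more general: it needs only that the infimum defining $g(\hat{y})$ is attained (guaranteed by compactness of $X$ and continuity), not continuity of $x(\cdot)$ or differentiability of $g$, and indeed strict convexity enters your argument only to make $x(\hat{y})$ single-valued so that the identity $x(\hat{y}^k)=\hat{x}^k$ is well posed. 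What the paper's heavier machinery buys in exchange is the extra structural fact that $g$ is differentiable on $\mathbb{R}_{++}^M$ with $\nabla g(y)=h(x(y))$, information not actually needed for \eqref{gradient} itself.
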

\begin{proof}
Let $y\in \mathbb{R}_+^M$ be  {arbitrarily} given. If $y\in \mathbb{R}_{++}^M$,  then since $l$ is continuously differentiable on $X\times \mathbb{R}_{++}^M$, $x(y)$ is the unique solution of $\inf\limits_{x\in X} l(x,y)$ and $g$ is concave , it follows from the Danskin theorem \cite{danskin1966theory} (cf. e.g., \cite[Theorem 10.2.1]{facchinei2003finite}) and \cite[Theorem 25.2]{rockafellar1970convex} that for any $y\in \mathbb{R}_{++}^M$, $g$ is differentiable  at $y$ with the gradient  
\begin{equation}\label{gyy}
	\nabla g(y) = \nabla_y l(x(y),y) = h(x(y))\quad\mbox{and}\quad \nabla g(y)=\partial g(y).
\end{equation}
If  $y\in  \mathbb{R}_+^M\setminus \mathbb{R}_{++}^M$, then  denote $U$ as the set of indexes of $y$ whose elements are equal to $0$, i.e., $U:=\big\{j\in\{1,\ldots,M\}\mid y_j=0\big\}$. Define $\{\tilde{y}\}\in\mathbb{R}^M_{++}$ by
\[
\tilde{y}_j:=\left\{\begin{array}{ll}
	\delta_u & \mbox{if $j\in U$}, \\
	y_j & \mbox{otherwise},
\end{array}\right. \quad        {j\in\{1,\ldots,M\}, }  
\]
where $\delta_u>0$ for each $u\in U$. Therefore, by the continuity of $h(x(\cdot))$ over $\mathbb{R}_+^M $, we know that $\lim\limits_{\tilde{y}\rightarrow y} h(x(\tilde{y}))$ exists and equals to $h(x(y))$.
Thus, by combining \cref{gyy}, we obtain that 
\begin{equation*} 
	\lim\limits_{\tilde{y}\rightarrow y} \nabla g(\tilde{y}) =  \lim\limits_{\tilde{y}\rightarrow y} h(x(\tilde{y})) = h(x(y)) .	
\end{equation*}	
	Together with the concavity and the continuity of $g$ on $\mathbb{R}_+^M$, we obtain that for $\forall y'\in \mathbb{R}^M $
 \begin{equation}	\label{subdiff}
 \begin{aligned}
 \langle h(x(y)),y'-y \rangle = \lim\limits_{\tilde{y}\rightarrow y}\langle \nabla g(\tilde{y}),y'-\tilde{y} \rangle
 \geq  \lim\limits_{\tilde{y}\rightarrow y} g(y')-g(\tilde{y}) 
 = g(y')-g(y). 
 \end{aligned}
 \end{equation} 
 It follows from the definition of subdifferential that $h(x(y))\in \partial g(y)$.

For each $k$, from the process of ALM (\ref{alma}), since the augmented Lagrangian function $L$ is convex about $x$  {and $X$ is convex}, we have that 
\begin{equation}\label{kkt}
0 \in \nabla f(\hat{x}^k) + c^{k-1}\nabla h(\hat{x}^k) {^T} \big(h(\hat{x}^k)+ \frac{y^{k-1}}{c^{k-1}}\big)_++\mathcal{N}_X(\hat{x}^k),
\end{equation}
where $\mathcal{N}_X(\hat{x}^k)$ denotes the normal cone of $X$ at $\hat{x}^k$. Together with \cref{alb}, we obtain that
$$ 
0\in\nabla f(\hat{x}^k) + \nabla h(\hat{x}^k) {^T}\hat{y}^k+ \mathcal{N}_X(\hat{x}^k) ,
$$
which implies that under the convexity of $l$ about $x$,
$$ \hat{x}^k  = \arg\min\limits_{x\in X} f(x)+ \big\langle h(x),\hat{y}^k\big\rangle =\arg\min\limits_{x\in X} l(x,\hat{y}^k) ,$$ 
thus $ \hat{x}^k= x(\hat{y}^k)$ and $h(x(\hat{y}^k)) = h(\hat{x}^k)\in\partial g(\hat{y}^k)$,
which leads to \cref{gradient}.
\end{proof}


%

In order to study the almost sure convergence of SALM \eqref{alm}, we make the following assumption on the random errors $\epsilon^{k}$.
\begin{assumption}\label{ass5}
	There exists a constant $\sigma>0$ such that for each $k$,
	$$
	\mathbb{E}\big(\epsilon^{k}\mid \mathcal{F}^{k-1} \big)=0 \quad \mbox{and} \quad \mathbb{E}\big(\big\|\epsilon^{k}\big\|^{2}\mid \mathcal{F}^{k-1}\big) \leq \sigma^{2},
	$$
	where $\mathcal{F}^{k-1}$ denotes the $\sigma$-algebra generated by $\{\epsilon^{1},\epsilon^{2},\ldots,\epsilon^{k-1}\}$.
\end{assumption}

In the following theorem, inspired by \cite{toulis2021proximal}, we obtain the almost sure convergence of the SALM \cref{alm}, in which a supermartingale convergence property  \cite[Theorem 1]{robbins1971convergence} plays a crucial role.
\begin{theorem}\label{stochastic}
	Suppose $\cref{ass3}$, $\cref{ass2}$  {and} $\cref{ass5}$ hold. Let $c^{k} = c^{0} k^{-q}$ with $c^{0}>0$ and $q \in(\frac{1}{2},1]$. Then,  {if the dual problem of \cref{nlp} has the unique solution $y^\ast$,} the iterates $y^{k}$ of the SALM (\ref{alm})  {will} converge almost surely to $y^\ast$.
\end{theorem}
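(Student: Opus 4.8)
The plan is to apply the supermartingale (Robbins--Siegmund) convergence theorem \cite{robbins1971convergence} to the nonnegative sequence $V_k := \|y^k-y^\ast\|^2$, after recognizing the multiplier recursion as a perturbed projected subgradient ascent on the concave dual essential objective $g$. Writing the update as $y^{k+1}=(y^k+c^k h(x^{k+1}))_+$ and using $(y^\ast)_+=y^\ast$ (since $y^\ast\in\mathbb{R}_+^M$), I would first invoke the nonexpansiveness of $(\cdot)_+$ to obtain
\[
\|y^{k+1}-y^\ast\|^2\le \|y^k-y^\ast\|^2+2c^k\langle h(x^{k+1}),\,y^k-y^\ast\rangle+(c^k)^2\|h(x^{k+1})\|^2,
\]
which isolates a cross term that must be turned into a genuine optimality-gap decrease.

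The central step is this conversion. I would split $h(x^{k+1})=h(\hat{x}^{k+1})+\Delta^{k+1}$ with $\Delta^{k+1}:=h(x^{k+1})-h(\hat{x}^{k+1})$, so that \cref{ass2} and $x^{k+1}=\hat{x}^{k+1}-c^k\epsilon^{k+1}$ give $\|\Delta^{k+1}\|\le L_h c^k\|\epsilon^{k+1}\|$. For the exact part I would write $\langle h(\hat{x}^{k+1}),y^k-y^\ast\rangle=\langle h(\hat{x}^{k+1}),y^k-\hat{y}^{k+1}\rangle+\langle h(\hat{x}^{k+1}),\hat{y}^{k+1}-y^\ast\rangle$; the first term is bounded by $c^k\|h(\hat{x}^{k+1})\|^2\le c^k B^2$ with $B:=\max_{x\in X}\|h(x)\|$ (using $\|\hat{y}^{k+1}-y^k\|\le c^k\|h(\hat{x}^{k+1})\|$), while \cref{lemma1} supplies $h(\hat{x}^{k+1})\in\partial g(\hat{y}^{k+1})$, so concavity of $g$ yields $\langle h(\hat{x}^{k+1}),\hat{y}^{k+1}-y^\ast\rangle\le g(\hat{y}^{k+1})-g(y^\ast)$. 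Taking $\mathbb{E}(\cdot\mid\mathcal{F}^k)$, the bias of the perturbation is controlled by $\|\mathbb{E}(\Delta^{k+1}\mid\mathcal{F}^k)\|\le L_h c^k\sigma$ (from \cref{ass5} and Jensen's inequality), and Young's inequality absorbs it into a $(c^k)^2V_k$ term plus a summable remainder. This produces the Robbins--Siegmund recursion
\[
\mathbb{E}\big(\|y^{k+1}-y^\ast\|^2\mid\mathcal{F}^k\big)\le \big(1+(c^k)^2\big)\|y^k-y^\ast\|^2-2c^k\big(g(y^\ast)-g(\hat{y}^{k+1})\big)+\xi_k,
\]
with $\xi_k=O((c^k)^2)$ and $g(y^\ast)-g(\hat{y}^{k+1})\ge 0$ since $y^\ast$ maximizes $g$. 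Because $c^k=c^0k^{-q}$ with $q>\tfrac12$, both $\sum(c^k)^2$ and $\sum\xi_k$ converge, so the hypotheses of the supermartingale theorem hold.

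Applying that theorem gives, almost surely, that $\|y^k-y^\ast\|^2$ converges to a finite limit $V_\infty$ and that $\sum_k c^k\big(g(y^\ast)-g(\hat{y}^{k+1})\big)<\infty$. Here the second step-size condition enters: since $q\le 1$ we have $\sum_k c^k=\infty$, and the summability then forces $\liminf_k\big(g(y^\ast)-g(\hat{y}^{k+1})\big)=0$, i.e.\ some subsequence satisfies $g(\hat{y}^{k_j+1})\to g(y^\ast)$. Convergence of $V_k$ makes $\{y^k\}$ bounded, and $\|\hat{y}^{k+1}-y^k\|\le c^k B\to 0$ forces $\{\hat{y}^{k+1}\}$ to share its subsequential limits; extracting a convergent sub-subsequence $\hat{y}^{k_{j_i}+1}\to\bar{y}\in\mathbb{R}_+^M$ and using the continuity of $g$ established earlier gives $g(\bar{y})=g(y^\ast)=\max g$, so $\bar{y}$ is dual optimal. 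The assumed uniqueness of $y^\ast$ forces $\bar{y}=y^\ast$, whence $y^{k_{j_i}}\to y^\ast$ and therefore $V_\infty=0$, i.e.\ $y^k\to y^\ast$ almost surely.

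I expect the main obstacle to be the simultaneous control of the progress term and the perturbation bias in the recursion: unlike the plain Robbins--Monro setting, $\Delta^{k+1}$ is a \emph{nonlinear} function of $\epsilon^{k+1}$, so $\mathbb{E}(\Delta^{k+1}\mid\mathcal{F}^k)\neq 0$ and the zero-mean structure of \cref{ass5} cannot be exploited directly; it must be replaced by a first-moment bound, with the crucial extra factor $c^k$ from $x^{k+1}=\hat{x}^{k+1}-c^k\epsilon^{k+1}$ keeping all error terms at order $(c^k)^2$. The second delicate point is upgrading convergence of the distance $\|y^k-y^\ast\|$ to convergence of $y^k$ itself, where both the divergence $\sum c^k=\infty$ (ensuring the liminf optimality gap vanishes) and the uniqueness of the dual solution are indispensable.
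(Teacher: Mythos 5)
Your proposal is correct, and its first half coincides with the paper's own argument: the same nonexpansiveness expansion of $\|y^{k+1}-y^\ast\|^2$, the same splitting of $h(x^{k+1})$ into the exact part $h(\hat{x}^{k+1})$ and a noise part whose first moment carries the crucial extra factor $c^k$ from \eqref{almb}, the same use of \cref{lemma1} together with concavity of $g$, and the same appeal to the supermartingale theorem of \cite{robbins1971convergence} under $\sum c^k=\infty$ and $\sum (c^k)^2<\infty$. Where you genuinely diverge is in what you feed to that theorem and how you close. The paper compresses the progress term into $R^k:=\mathbb{E}\big(\langle y^{k-1}-y^\ast,h(\hat{x}^{k})\rangle\mid\mathcal{F}^{k-1}\big)\le 0$ (noting the projection cross term is in fact nonpositive, whereas you bound it by $c^kB^2$ with $B:=\max_{x\in X}\|h(x)\|$ --- lossier but harmless, since it only contributes a summable $O((c^k)^2)$ remainder), concludes $\sum c^{k-1}R^k$ is finite, and then argues by contradiction: assuming the limit of $\|y^k-y^\ast\|^2$ is positive, it extracts a subsequence with $\langle \hat{y}^{k_j}-y^\ast,h(\hat{x}^{k_j})\rangle\to 0$, invokes continuity of $x(\cdot)$, a strict-concavity-at-$y^\ast$ claim derived from uniqueness, and an $L^2$ estimate $\mathbb{E}\|y^k-\hat{y}^k\|^2=O((c^{k-1})^4)$ combined with Lebesgue dominated convergence to transfer the conclusion from $\hat{y}^k$ back to $y^k$. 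You instead retain the quantitative dual gap $g(y^\ast)-g(\hat{y}^{k+1})\ge 0$ inside the recursion, so Robbins--Siegmund directly yields $\sum_k c^k\big(g(y^\ast)-g(\hat{y}^{k+1})\big)<\infty$; the divergence $\sum c^k=\infty$ (here is where $q\le 1$ enters, exactly as in the paper) produces a subsequence along which the gap vanishes, continuity of $g$ on $\mathbb{R}_+^M$ (established before \cref{lemma1} under \cref{ass3}) and uniqueness of the maximizer identify the limit, and the deterministic bound $\|\hat{y}^{k+1}-y^k\|\le c^kB\to 0$ replaces the paper's $L^2$/dominated-convergence step. Your route buys robustness: it needs only that $y^\ast$ is the unique maximizer of a continuous concave $g$, sidestepping the paper's delicate inference that uniqueness yields strict concavity (which, as literally stated, does not hold in general; the weaker fact actually needed --- that any cluster point $\hat{y}^*$ with $g(\hat{y}^*)\ge g(y^\ast)$ must equal $y^\ast$ --- is precisely what your argument uses), and it avoids an interchange of limit and expectation. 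One caveat you share with the paper: bounding $\|\Delta^{k+1}\|\le L_hc^k\|\epsilon^{k+1}\|$ applies \cref{ass2} at $x^{k+1}=\hat{x}^{k+1}-c^k\epsilon^{k+1}$, which need not lie in $X$, so strictly speaking both proofs require $h$ to be Lipschitz on a neighborhood of $X$ (or globally); this is inherited from the paper and is not a defect of your proposal.
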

\begin{proof}
	By (\ref{alm}), we know that for each $k$,
\begin{eqnarray}
\|y^k - y^\ast\|^2 &=& \|(y^{k-1}+c^{k-1}h(x^k))_+ - y^\ast\|^2 \leq \|y^{k-1}+c^{k-1}h(x^k) - y^\ast\|^2 \nonumber\\
& =&\|y^{k-1}- y^\ast\|^2 + 2c^{k-1}\big\langle y^{k-1}- y^\ast ,h(\hat{x}^k) \big\rangle  +  2c^{k-1}\big\langle y^{k-1}- y^\ast ,h(x^k)-h(\hat{x}^k) \big\rangle \nonumber\\
&& +{ (c^{k-1})}^2\|h(x^k) \|^2.\label{a1} 
\end{eqnarray}
By taking expectations in (\ref{a1}) conditional on $\mathcal{F}^{k-1}$, we obtain that 
\begin{equation}\label{main1}
\begin{aligned}
\mathbb{E}\big( 	\|y^k - y^\ast\|^2 |\mathcal{F}^{k-1}\big)\leq& \|y^{k-1}- y^\ast\|^2 + 2c^{k-1}\mathbb{E}( \big\langle y^{k-1}- y^\ast , h(\hat{x}^k) \big\rangle |\mathcal{F}^{k-1})\\
+  2c^{k-1}&\mathbb{E}\big( \big\langle y^{k-1}- y^\ast ,h(x^k)-h(\hat{x}^k) \big\rangle|\mathcal{F}^{k-1}\big) + (c^{k-1})^2\mathbb{E}\big( \|h(x^k)\|^2 |\mathcal{F}^{k-1}\big).
\end{aligned}
\end{equation}
For each $k$, denote  $R^k :=\mathbb{E}\left( \left\langle y^{k-1}- y^\ast , h(\hat{x}^k) \right\rangle |\mathcal{F}^{k-1}\right)$. Since $\hat{x}^k$ is $\mathcal{F}^{k-1}$-measurable, we have that for each $k$,
\begin{eqnarray}
R^k&=&\big\langle y^{k-1}- y^\ast ,h(\hat{x}^k) \big\rangle  = \big\langle y^{k-1}- \hat{y}^k ,h(\hat{x}^k) \big\rangle +\big\langle \hat{y}^k - y^\ast ,h(\hat{x}^k) \big\rangle \nonumber\\
&  =& \big\langle y^{k-1}- (y^{k-1} + c^{k-1}h(\hat{x}^k ))_+ ,h(\hat{x}^k) \big\rangle +\big\langle \hat{y}^k - y^\ast ,h(\hat{x}^k) \big\rangle \nonumber\\
& \leq& \big\langle \hat{y}^k - y^\ast ,h(\hat{x}^k) \big\rangle.\label{a2}
\end{eqnarray} 
By the concavity of $g$, we know from  Lemma \ref{lemma1} that for each $k$,
$$\langle \hat{y}^k - y^\ast ,h(\hat{x}^k) \rangle\leq g(\hat{y}^k)-g(y^\ast) \leq 0,$$
which implies that $ R^k \leq 0 $.

On the other hand, by \cref{ass2}, \cref{ass5} and Jensen's inequality for conditional expectations (cf. e.g., \cite{chow2003probability}), we know that   
\begin{eqnarray}
 &&\mathbb{E}\big( \big\langle y^{k-1}- y^\ast ,h(x^k)-h(\hat{x}^k) \big\rangle|\mathcal{F}^{k-1}\big) \leq \mathbb{E}\big( \| y^{k-1}- y^\ast\| \|h(x^k)-h(\hat{x}^k)\| |\mathcal{F}^{k-1}\big) \nonumber \\
 &\leq& (1+\| y^{k-1}- y^\ast\|^2) L_h\mathbb{E}\big( \|x^k-\hat{x}^k\| |\mathcal{F}^{k-1}\big) \nonumber \\
 &\leq& c^{k-1}(1+\| y^{k-1}- y^\ast\|^2) L_h\mathbb{E}\big( \| \epsilon^k\| |\mathcal{F}^{k-1}\big) \nonumber \\
 &\leq& c^{k-1}(1+\| y^{k-1}- y^\ast\|^2) L_h\sqrt{\mathbb{E}\big( \| \epsilon^k\|^2 |\mathcal{F}^{k-1}\big)} \nonumber \\
 &\leq& c^{k-1}(1+\| y^{k-1}- y^\ast\|^2) L_h\sigma , \quad a.s.\label{a3}
\end{eqnarray}
	
	It follows from \cref{ass2} and the boundness of $x$ that for each $k$, 
	\begin{equation}\label{last}
		\begin{aligned}
			\|h(x^k)\|^2 &= \| h(x^k) -  h(x^*) + h(x^*)\|^2
			\leq 2\|h(x^k) -  h(x^*) \|^2 + 2\|  h(x^*) \|^2\\
			&\leq  2L_h^2d^2 +2\|  h(x^*) \|^2,
		\end{aligned}
	\end{equation}
where $d:=\max\limits_{\forall  x,x^\prime\in X} {\|x - x'\|}$. Then take expectation in \cref{last} conditional on $\mathcal{F}^{k-1}$, we conclude that for each $k$,
	\begin{equation}\label{a4}
		\mathbb{E}\big( \|h(x^k)\|^2 |\mathcal{F}^{k-1}\big) \leq 2L_h^2d^2 +2\|  h(x^*) \|^2. 
	\end{equation}
	It then follows from \cref{main1}, (\ref{a2}), (\ref{a3}) and (\ref{a4}) that 
	$$
	\mathbb{E}\big( 	\|y^k - y^\ast\|^2 |\mathcal{F}^{k-1}\big) \leq (1+(c^{k-1})^2A_1)\| y^{k-1} -  y^\ast \|^2 + 2 c^{k-1}R^k + (c^{k-1})^2A_2,\quad a.s.,
	$$
	where $A_1:=2L_h\sigma$ and $A_2 :=2L_h^2d^2 +2\|  h(x^*) \|^2+2L_h\sigma$ are constants. Since the random variable $R^{k}$ is non-positive, $\sum c^{k}=\infty$ and $\sum (c^{k})^{2}<\infty$, by employing \cite[Theorem 1]{robbins1971convergence}, we know that $\left\|y^{k}-y^\ast\right\|^{2}$ converges to some $B\ge 0$ and $\sum c^{k-1} R^{k}>-\infty$ almost surely.
	
	 If $B \neq 0$, we have $\lim \inf \left\|y^{k}-y^\ast\right\|>0$ almost surely. If $\lim \inf R^k= 0$ almost surely, by (\ref{a2}) and \cref{lemma1}, we have $\lim \inf \left\langle \hat{y}^k- y^\ast ,h(\hat{x}^k) \right\rangle= 0$ almost surely. Thus,  there exists a subsequence $\{(\hat{x}^{k_j},\hat{y}^{k_j})\}$ satisfying $$\lim  \left\langle \hat{y}^{k_j} - y^\ast ,h(\hat{x}^{k_j}) \right\rangle= 0,\quad \mbox{ almost surely}.$$ 
	Since  $\hat{y}^k$ satisfies $\hat{y}^k = P^k(y^{k-1})$ and $P^k(y^\ast) = y^\ast$ by \cref{ppa} for all $k$, it then follows from the nonexpansive of $P^k$ \cite{rockafellar1976monotone}, 
	we have that for each $k$,
	\begin{equation}\label{last2}
	\begin{aligned}
	\big\|\hat{y}^{k}-y^\ast\big\|^{2} &= \|P^k(y^{k-1}) - P^k(y^\ast)  \|^2\leq \|y^{k-1}-y^\ast \|^2.
	\end{aligned}
	\end{equation}
	By \cref{last2} and $\left\|y^{k}-y^\ast\right\|^{2}$ converges to some $B\ge 0$ almost surely, the subsequence $\{\hat{y}^{k_j}\}$ is bounded almost surely. Then, there exists a subsequence $\{(\hat{x}^{k_{j(i)}},\hat{y}^{k_{j(i)}})\}$ and a point $(\hat{x}^*,\hat{y}^*)$ satisfying that $\lim \hat{y}^{k_{j(i)}} = \hat{y}^*$ and $\lim \hat{x}^{k_{j(i)}} = x(\hat{y}^{k_{j(i)}}) = \hat{x}^*$ almost surely by Lemma \ref{lemma1}. It follows the continuity of $h$ that $ \left\langle \hat{y}^* - y^\ast , h(\hat{x}^*)\right\rangle= 0$. Since $y^\ast$ is the unique solution for the dual problem of \cref{nlp} and $g$ is concave, we have that $g$ is strictly concave at $y^\ast$, i.e. $ \left\langle \hat{y}^* - y^\ast ,  h(\hat{x}^*) \right\rangle< 0$ for $\hat{y}^*\neq y^\ast$, which implies $\hat{y}^*= y^\ast$. By \cref{ass2} and \cref{ass5}, we have that
	$$  
	\begin{aligned}
	\mathbb{E}\big(\|y^k - \hat{y}^k \|^2\big) &= \mathbb{E}\big(\mathbb{E}(\|y^k - \hat{y}^k\|^2\mid \mathcal{F}^{k-1})\big) \\
	&\leq \mathbb{E}\big(\mathbb{E}(\|(y^{k-1} + c^{k-1}h(x^k ))_+ - (y^{k-1} + c^{k-1}h(\hat{x}^k))_+ \|^2\mid \mathcal{F}^{k-1})\big)\\
	&\leq (c^{k-1})^2\mathbb{E}\big(\mathbb{E}(\|h(x^k ) -h(\hat{x}^k) \|^2\mid \mathcal{F}^{k-1})\big)\\
	&\leq (c^{k-1})^4L_h^2\mathbb{E}\big(\mathbb{E}(\|\epsilon^k \|^2\mid \mathcal{F}^{k-1})\big)\leq  (c^{k-1})^4L_h^2\sigma^2 \rightarrow 0.
	\end{aligned}
	$$
	Let $k=k_{j(i)}$, it follows from Lebesgue's dominated convergence theorem that 
	$$
	\lim \mathbb{E}\big(\|y^{k_{j(i)}} - \hat{y}^{k_{j(i)}} \|^2\big) = \mathbb{E}\big(\lim\|y^{k_{j(i)}} - \hat{y}^{k_{j(i)}} \|^2\big)=
	\mathbb{E}\big(\lim\|y^{k_{j(i)}} - y^\ast \|^2\big)=0.
	$$
	The above equation implies $\lim\|y^{k_{j(i)}} - y^\ast \|^2=0$ almost surely, which is contradictory to $\lim \inf \left\|y^{k}-y^\ast\right\|>0$ almost surely. Therefore, we know that $\lim \inf R^k< 0$ almost surely, which implies that the series $\sum c^{k-1} R^{k}$  diverges almost surely since $\sum c^{k}=\infty$. This is contradictory to $\sum c^{k-1} R^{k}>\infty$ almost surely. Thus, we have $B=0$. This completes the proof.
\end{proof}

\section{RMALM: Robbins-Monro augmented Lagrangian method}
\label{sec:alg} 

We know from \cref{stochastic} that under suitable conditions, the general SALM can still be guaranteed to converge almost surely to the original solution of \eqref{nlp}. In this section, we shall present a practical SALM, so-called the Robbins-Monro augmented Lagrangian method (RMALM) for the stochastic convex optimization \cref{nlp}, in which the Robbins-Monro type method \cite{robbins1951stochastic} is employed to solve the stochastic subproblem \eqref{alma}, inexactly. In addition, it is also worth noting that when $M$ is large, the computation of the function value and gradient of the augmented Lagrangian function \cref{eq:def-ALF} is expensive or even intractable. To cope with this difficulty, we may rewrite the following summation in the form of an expectation, i.e.,
$$
\begin{aligned}
	\frac{c^k}{2}  \|\big(h(x)+\frac{y^k}{c^k}\big)_+ \|^2-\frac{1}{2c^k} \|y^k \|^2 &= \sum_{j=1}^{M} \frac{c^k}{2}\big(h_j(x)+\frac{y^{k}_{j}}{c^k}\big)_+^2 - \frac{1}{2c^k} {y^{k}_{j}}^2 \\
	&= \frac{1}{M}\sum_{j=1}^{M} \tilde{h}(x,\zeta_j,y^k,c^k )=\mathbb{E}(\tilde{h}(x,\zeta,y^k,c^k)) ,
\end{aligned}
$$ 
where for each $j$, $ \tilde{h}(x,\zeta_j {,y^k,c^k} ) = M \big( \frac{c^k}{2}\big(h_j(x)+\frac{y^{k}_{j}}{c^k}\big)_+^2 - \frac{1}{2c^k} {y^{k}_{j}}^2\big)$ denotes the $j$-th item in this summation and $\zeta$ is a random variable, which follows a discrete uniform empirical density distribution $Z$ of $\{\zeta_1,...,\zeta_M\}$.  Since the independence of $\xi$ and $\zeta$, we may further rewrite the augmented Lagrangian function \cref{eq:def-ALF} in the expectation form, $L(x,y^k,c^k) = \mathbb{E}[\tilde{L}(x,y^k,c^k,\xi,\zeta)]$, where $ \tilde{L}(x,y^k,c^k,\xi,\zeta) = f_0(x)+F(x,\xi)+  \tilde{h}(x,\zeta {,y^k,c^k} ) $. Thus, the $k$-th iteration of SALM (\ref{alm}) takes the following inexact form:
\begin{subequations}\label{alm1}
	\begin{align} 
		&x^{k+1} \approx \arg \min\limits_{x\in X} \mathbb{E}(\tilde{L}(x,y^k,c^k,\xi,\zeta)),\label{alm1a}\\
		&y^{k+1} = \max\big\{0,y^{k} + c^k h(x^{k+1})\big\},\label{alm1c} 
	\end{align} 
\end{subequations}
where the subproblem \cref{alm1a} is solved by the  Robbins-Monro type method \cite{robbins1951stochastic}: for each $k$ and a given  integer $S^{k+1}>1$,
\begin{equation}\label{rmalm}
	\begin{aligned}
		& w_{1}^k =x^{k},  \\
		& w_{s+1}^k ={\rm prox}_X\{w_{s}^k-\gamma_{s}^k\nabla_{w}\tilde{L}(w_{s}^k, y^k,c^k,\xi_{s}^k,\zeta_{s}^k) \} , \quad s=1, \ldots, S^{k+1}-1,  \\
		& x^{k+1} =w_{S^{k+1}}^k, 
	\end{aligned}
\end{equation}
where $\gamma_{s}^k>0$ a given constant. Overall, the Robbins-Monro augmented Lagrangian method (RMALM) for solving the stochastic convex optimization \cref{nlp} is summarized as \cref{alg}. 
\begin{algorithm}
	\caption{Robbins-Monro Augmented Lagrangian Method}
	\label{alg}
	\begin{algorithmic} 
		\STATE{Initial $x^0\in \mathbb{R}^n$, $y^0\in \mathbb{R}^M_+$. Define $\gamma_s^k:=\tau_s\eta^k/(n+\beta)$, where $\beta>0$ and $\{\tau_s\}$ and $\{\eta^k\}$ are bounded sequences. $\{c^k\}$ is a given positive sequence.}
		\FOR{$k=0,1,2,\ldots,K-1$}
		\STATE{$w_1^k=x^k$.}
		\WHILE{$1\leq n\leq S^{k+1}-1$}
		\STATE{Random sample $\xi_{s}^k$ and $\zeta_{s}^k$.}
		\STATE{$w_{s+1}^k =\operatorname{prox}_X\{w_{s}^k-\gamma_{s}^k\nabla_{w}\tilde{L}(w_{s}^k, y^k,c^k,\xi_{s}^k,\zeta_{s}^k) \}$.}
		\ENDWHILE
		\STATE{$x^{k+1} =w_{S^{k+1}}^k$.}
		\STATE{$y^{k+1} = \max\left\{0,y^{k} + c^k h(x^{k+1})\right\}$.}
		\ENDFOR
		\RETURN $x^K$
	\end{algorithmic}
\end{algorithm}

Traditional inexact ALM stops the subproblem by suitable stopping criteria. However, due to expectations, verifying the stopping criteria for stochastic convex optimization is usually computationally intractable. In our method, the subproblem iterates a given number of steps $S^k$. In the next subsection, we shall study the convergence of the RMALM with the fixed $S^k\equiv S$ and the increasing $S^k$, respectively.

\subsection{Convergence analysis of RMALM}

Since there is no guarantee that $x^{k+1}$ is an unbiased estimate of the exact solution $\hat{x}^{k+1}$ of the augmented Lagrangian subproblem \eqref{alma}, in stead of studying the almost sure convergence of the iteration sequence $\{y^k\}$ as in the \cref{stochastic}, we shall study the convergence properties of the RMALM in the sense of expectation.  

	To conclude, we need the following assumption on the dual essential objective function $g$.
	
	\begin{assumption}\label{lipschitz}
		$g$ is a $\alpha-$strongly concave function.
	\end{assumption}
	
	The above assumption is natural, and we will give the following examples where \cref{lipschitz} may satisfy.
	\begin{itemize}
		\item[(1)] Linear constraint: when the constraint $h(x) = Ax-b \leq 0$, from \cite[Proposition 2.7]{guigues2021inexact} we have $g$ is strongly concave on $\mathbb{R}^M$ with strong concavity $\frac{\lambda_{\min}(AA^T)}{L_f}$, where $L_f$ is Lipschitz constant of $\nabla f$.
		
		\item[(2)] Quadratically constrained quadratic program: when the problem is formulated as follows
		\begin{equation}\label{low} 
			\left\{\begin{array}{ll}
				\min\limits_{x \in \mathbb{R}^{n}}&\ f(x):=\frac{1}{2} x^{T} Q_{0} x+a_{0}^{T} x+b_{0}, \\
				\text{s.t.}&\	h_{1}(x):=\frac{1}{2} x^{T} Q_{1} x+a_{1}^{T} x+b_{1} \leq 0.
			\end{array}\right.
		\end{equation}
		Assume that $Q_{0}, Q_{1}$ are positive definite, $a_{0} \neq Q_{0} Q_{1}^{-1} a_{1}$, and there exists $x_{0}$ such that $h_{1}\left(x_{0}\right)<0$. Let $l$ be any lower bound of the optimal value about \cref{low}, and 
		$$\bar{\mu}=\left(l-f\left(x_{0}\right)\right) / h_{1}\left(x_{0}\right) \geq 0.$$ Then from \cite[Proposition 2.8]{guigues2021inexact}, the dual essential objective function $g$ is strongly concave on the interval $[0, \bar{\mu}]$ with constant of strong concavity
		\begin{eqnarray*}
			\alpha_{D}&=&\big(Q_{1}^{-1 / 2}\big(a_{0}-  Q_{0} Q_{1}^{-1} a_{1}\big)\big)^{T}\big(Q_{1}^{-1 / 2} Q_{0} Q_{1}^{-1 / 2}+\bar{\mu} I_{n}\big)^{-3} Q_{1}^{-1 / 2}\left(a_{0}-Q_{0} Q_{1}^{-1} a_{1}\right) \\
			&>& 0.
		\end{eqnarray*}
		\item[(3)] General nonlinear constraint: for general nonlinear constraint, we usually need the linear independence of $\{\nabla h_j(x):j=1,2,\ldots M\}$ at optimal points by \cite[Theorem 2.10]{guigues2021inexact}, thus M cannot be larger than $n$ theoretically. In practical, tests in Section \ref{test} also show favorable experimental results on $n \ll M$.	
	\end{itemize}

	Recall that for each $k$, $\hat{x}^{k+1}$ and $\hat{y}^{k+1}$ are the exact solutions of \cref{alma} and \cref{ppa}. The following lemma is on the iteration error estimations  between $\{\hat{x}^{k+1},\hat{y}^{k+1}\}$ and the inexact solutions $\{{x}^{k+1},{y}^{k+1}\}$. 
	
	\begin{lemma}\label{exactalm}
		For each $k$, let $\hat{x}^{k+1}$ and $\hat{y}^{k+1}$ be the exact solutions of \cref{alma} and \cref{ppa}, respectively. Suppose Assumption \ref{lipschitz} holds. Then, we have for each $k$,
		\begin{equation}\label{y}
			\|\hat{y}^{k+1}-y^\ast\|^2\leq  \theta^k\|  {y}^k-y^\ast\|^2,
		\end{equation}
		where $\theta^k = \left(1+\alpha c^{k}\right)^{-2} <1$ and $y^\ast$ is the unique dual optimal solution of \cref{nlp}. 
		
		Let $T_l$ be the mapping defined by \eqref{eq:def-Tl}. If we further assume that the inverse $T_l^{-1}$ is Lipschitz continuous at the origin with modulus $a_l>0$, then for each $k$,
		\begin{equation} \label{x}
			{\|\hat{x}^{k+1}-x^\ast\|}^2\leq {\theta^k}'{\| {y}^k-y^\ast\|}^2,
		\end{equation}
		where ${\theta^k}' =  {\left[\frac{(2+\alpha c^k)a_l}{c^k+\alpha (c^k)^2}\right]}^2 $ and $x^\ast$ is the unique optimal solution to \cref{nlp}.
	\end{lemma}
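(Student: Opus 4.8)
The plan is to establish the two estimates independently: \cref{y} from the contractivity of the dual proximal resolvent under strong concavity, and \cref{x} by combining \cref{y} with the Lipschitz continuity of $T_l^{-1}$ at the origin.

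For \cref{y}, I would use that \cref{lipschitz} makes $T_g = -\partial g$ an $\alpha$-strongly monotone operator, so its resolvent is a contraction with factor better than one. Concretely, \cref{ppa} gives $\hat y^{k+1} = (I + c^k T_g)^{-1}(y^k)$, while $0\in T_g(y^\ast)$ shows $y^\ast$ is a fixed point of the same resolvent. Writing the inclusions $\tfrac{y^k - \hat y^{k+1}}{c^k}\in T_g(\hat y^{k+1})$ and $0\in T_g(y^\ast)$, pairing their difference with $\hat y^{k+1} - y^\ast$, and invoking $\alpha$-strong monotonicity followed by Cauchy--Schwarz yields $\|\hat y^{k+1} - y^\ast\|\le (1+\alpha c^k)^{-1}\|y^k - y^\ast\|$; squaring gives \cref{y} with $\theta^k = (1+\alpha c^k)^{-2}$, which is strictly below $1$ since $\alpha c^k>0$.

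For \cref{x}, the crux is to realize $(\hat x^{k+1}, \hat y^{k+1})$ as an exact solve of $T_l$ at a small perturbation of the origin. I would exhibit a residual $w^{k+1}$ with $(\hat x^{k+1}, \hat y^{k+1})\in T_l^{-1}(w^{k+1})$ as follows. The stationarity condition $0\in\nabla f(\hat x^{k+1}) + \nabla h(\hat x^{k+1})^{T}\hat y^{k+1} + \mathcal N_X(\hat x^{k+1})$, already used in \cref{kkt} and \cref{lemma1}, makes the primal block of $T_l$ vanish. For the dual block, the projection identity behind \cref{alb}, namely $(y^k + c^k h(\hat x^{k+1})) - \hat y^{k+1}\in N_{\mathbb{R}^M_+}(\hat y^{k+1})$, shows after dividing by $c^k$ that $\bigl(0,\tfrac{y^k - \hat y^{k+1}}{c^k}\bigr)\in T_l(\hat x^{k+1},\hat y^{k+1})$. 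Hence $w^{k+1} = \bigl(0,\tfrac{y^k - \hat y^{k+1}}{c^k}\bigr)$ and $\|w^{k+1}\| = \tfrac1{c^k}\|y^k - \hat y^{k+1}\|$. Invoking \cref{def:T-inv-Lip} (once $\|w^{k+1}\|\le\tau$) then gives $\|\hat x^{k+1} - x^\ast\|\le\|(\hat x^{k+1},\hat y^{k+1}) - (x^\ast,y^\ast)\|\le a_l\|w^{k+1}\|$.

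It remains to express everything through $\|y^k - y^\ast\|$. Using the triangle inequality and \cref{y}, $\|y^k - \hat y^{k+1}\|\le\|y^k - y^\ast\| + \|\hat y^{k+1} - y^\ast\|\le\bigl(1 + (1+\alpha c^k)^{-1}\bigr)\|y^k - y^\ast\| = \tfrac{2+\alpha c^k}{1+\alpha c^k}\|y^k - y^\ast\|$. Substituting into the previous bound gives $\|\hat x^{k+1} - x^\ast\|\le\tfrac{(2+\alpha c^k)a_l}{c^k+\alpha(c^k)^2}\|y^k - y^\ast\|$, and squaring produces \cref{x}. I expect the main obstacle to be the dual-block bookkeeping in the third paragraph: correctly matching the sign convention of the concave superdifferential of $l$ in $y$ with the normal-cone characterization of the projection, so that the residual really lands in $T_l$ with primal component zero. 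A secondary point to flag is that the estimate of \cref{def:T-inv-Lip} is only local, valid when $\|w^{k+1}\|\le\tau$; this is harmless asymptotically because $\|w^{k+1}\|$ is controlled by $\|y^k - y^\ast\|$, which the algorithm drives to zero.
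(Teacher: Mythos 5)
Your proposal is correct and takes essentially the same route as the paper: \eqref{y} via the $(1+\alpha c^k)^{-1}$-contraction of the resolvent $(I+c^kT_g)^{-1}$ under the $\alpha$-strong monotonicity implied by \cref{lipschitz}, and \eqref{x} by bounding $\operatorname{dist}\big((0,0),T_l(\hat{x}^{k+1},\hat{y}^{k+1})\big)$ by $(c^k)^{-1}\|y^k-\hat{y}^{k+1}\|$ and then applying the Lipschitz property of $T_l^{-1}$ together with the same triangle-inequality step, yielding identical constants. The only difference is that you derive the residual bound inline from the stationarity and projection identities (your $w^{k+1}=\big(0,\tfrac{y^k-\hat{y}^{k+1}}{c^k}\big)$ is exactly the specialization, with vanishing primal block, of the estimate the paper cites from Rockafellar), and you are in fact slightly more careful than the paper in flagging that the estimate in \cref{def:T-inv-Lip} is only valid locally, for $\|w^{k+1}\|\leq\tau$.
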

	\begin{proof}
		It follows from Assumption \ref{lipschitz} and \cite[Exercise 12.59]{rockafellar2009variational} that $T_g=-\partial g$ is strongly monotone operator with modulus $\alpha$. It then follows from \cite[(1.15)]{rockafellar1976monotone} that there exist the unique solution $y^\ast$ satisfying $0\in T_g(y^\ast)$ and for each $k$,
		\begin{equation*}
			\|\hat{y}^{k+1}-y^\ast\|=\|P^{k}\big( {y}^k\big)-P^{k}\big(y^\ast\big)\| \leq\big(1+\alpha c^{k}\big)^{-1}\| {y}^k-y^\ast \|,
		\end{equation*}
		which implies (\ref{y}) holds.
		
		Since that $T_l^{-1}$ is Lipschitz continuous at origin with modulus $a_l>0$, we know from Definition \ref{def:T-inv-Lip} that the unique primal and dual solution $x^\ast$ and $y^\ast$ of \cref{nlp} satisfy $ T_l^{-1}(0)={(x^\ast,y^\ast)}$ and for each $k$,
		\begin{equation}\label{lip}
			\|(\hat{x}^{k+1}, \hat{y}^{k+1})-(x^\ast, y^\ast)\| \leq  a_{l} \operatorname{dist}((0,0),T_l(\hat{x}^{k+1}, \hat{y}^{k+1})).
		\end{equation}
		For each $k$, donate \begin{equation*}
			\Phi_k(x)= \left\{
			\begin{array}{ll}
				L(x,y^k,c^k),&\quad \mbox{if $x\in X$,}\\
				\infty,&\quad\mbox{otherwise.}
			\end{array}			
			\right.
		\end{equation*}
		Then, by \cite[(4.21)]{rockafellar1976augmented} we obtain that for each $k$,
		\begin{equation}\label{corollary}
			\operatorname{dist}((0,0),T_{l}(\hat{x}^{k+1}, \hat{y}^{k+1})) \leq \big(\operatorname{dist}^{2}(0, \partial \Phi_k(\hat{x}^{k+1})+({c^{k}})^{-2}\|\hat{y}^{k+1}- {y}^{k}\|^{2}\big)^{1 / 2}.
		\end{equation}
		Since $\hat{x}^{k+1}$ is the optimal solution of \cref{alma}, we have $0\in\partial\Phi_k(\hat{x}^{k+1}) $. Then, we have for each $k$,
		\begin{equation*}
			\operatorname{dist}\big((0,0),T_{l}(\hat{x}^{k+1}, \hat{y}^{k+1})\big) \leq  {c^{k}}^{-1}\|\hat{y}^{k+1}- {y}^k\|.
		\end{equation*}
		This, together with (\ref{lip}), yields that for each $k$,
		\begin{equation*}
			\|(\hat{x}^{k+1}, \hat{y}^{k+1})-(x^\ast, y^\ast)\| \leq \frac{a_{l}}{c^k} ||\hat{y}^{k+1}- {y}^k||.
		\end{equation*}
		It then follows from (\ref{y}) that for each $k$, 
		\begin{eqnarray*}
			\|(\hat{x}^{k+1}, \hat{y}^{k+1})-(x^\ast, y^\ast)\| &\leq& \frac{a_{l}}{c^k} (\|\hat{y}^{k+1}-y^\ast||+|| {y}^k-y^\ast\|)\le \frac{a_{l}}{c^k}(1+\sqrt{\theta^k})\| {y}^k-y^\ast\| \\ 
			&=&\frac{(2+\alpha c^k)a_l}{c^k+\alpha ({c^k})^2}\| {y}^k-y^\ast\|.
		\end{eqnarray*}	
		This completes the proof.
	\end{proof}

	Under the following assumption, we know that the augmented Lagrangian function $L$ defined by \eqref{eq:def-ALF} is also strongly convex for all $x\in X$ with modulus $\mu$.
	\begin{assumption}\label{sc}
		The convex objective function $f$ of \cref{nlp} is $\mu-$strongly convex for all $x \in X$.
	\end{assumption}
	
	The following assumption on the stochastic gradients of augmented Lagrangian function $L$ is standard in the convergence analysis of the Robbins-Monro type method.
	\begin{assumption}\label{variance}
		Suppose that there exists a constant $\sigma>0$ such that for each $k$ and  $1 \le s \le S^{k+1}-1$,   $\nabla \tilde{L}(w, y^k, c^k,\xi_{s}^k,\zeta_{s}^k) $ in \cref{rmalm} satisfies
		\begin{equation}
			\begin{aligned}
				&\mathbb{E}( \nabla \tilde{L}(w, y^k, c^k,\xi_{s}^k,\zeta_{s}^k)  |\mathcal{H}_{s-1}^k ) = \nabla L(w, y^k, c^k),\\
				\ &\mathbb{E}(\|\nabla \tilde{L}(w, y^k, c^k,\xi_{s}^k,\zeta_{s}^k)\|^2|\mathcal{H}_{s-1}^k) \leq \sigma^2,		
			\end{aligned}\quad \forall\, w\in X,
		\end{equation}
		where $\mathcal{H}_{s-1}^k$ denotes the $\sigma$-algebra generated by $\{\xi_{1}^0,\zeta_1^0,\ldots,\xi_{S^1-1}^0,\zeta_{S^1-1}^0,\ldots,\xi_{s-1}^k,\zeta_{s-1}^k\}$.
	\end{assumption}
	
	The following lemma is on the non-asymptotic estimation on the distance between an inexact solution $x^{k+1}$ generated by the Robbins-Monro type method \eqref{rmalm} and the exact solution $\hat{x}^{k+1}$ of \cref{alm1a}.
	\begin{lemma}\label{rm}
		Suppose \cref{sc} and \cref{variance} hold. For each $k$ and $1\le s\le S^{k+1}-1$, let   $\gamma_s^k=\tau_s\eta^k/(n+\beta)$ with $0<\underline{\tau}< \tau_s < \bar{\tau}$, $1/(\mu\underline{\tau})<\eta^k <\eta $ and $\beta >2\mu\eta\bar{\tau}-1$. Then, for each $k$, the $k$-iteration $x^{k+1}$ generated by  \cref{rmalm} satisfies the following inequality
		\begin{equation}\label{l1}
			\mathbb{E}\big(\|x^{k+1}-\hat{x}^{k+1}\|^{2}\big) \leq \frac{v}{S^{k+1}+\beta}  
		\end{equation}
		where  
		\begin{equation} \label{v}
			v:= \max \left\{\frac{\eta^{2}\bar{\tau}^2 { \sigma}^{2}}{2 \mu \eta\underline{\tau}-1}, (\beta+1)d^{2}\right\} \quad \mbox{with} \quad d:=\max\limits_{\forall  x,x^\prime\in X} {\|x - x'\|}.
		\end{equation}
	\end{lemma}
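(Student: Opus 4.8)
The plan is to establish a one-step stochastic recursion for the inner Robbins--Monro iterates $\{w_s^k\}$ and then close it by induction on $s$, in the classical spirit of stochastic approximation for strongly convex objectives. Throughout I fix the outer index $k$ and abbreviate $\hat x:=\hat x^{k+1}$, $w_s:=w_s^k$, $\gamma_s:=\gamma_s^k$, writing $G(w):=\nabla_w L(w,y^k,c^k)$ for the exact gradient and $\tilde G_s:=\nabla_w\tilde L(w_s^k,y^k,c^k,\xi_s^k,\zeta_s^k)$ for its stochastic surrogate in \eqref{rmalm}. Two facts drive the argument: the projection $\operatorname{prox}_X$ onto the convex set $X$ is nonexpansive and fixes every point of $X$, so $\operatorname{prox}_X(\hat x)=\hat x$; and $\hat x$, being the minimizer of $L(\cdot,y^k,c^k)$ over $X$, satisfies the variational inequality $\langle G(\hat x),w-\hat x\rangle\ge 0$ for all $w\in X$.

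First I would derive the recursion. Expanding $\|w_{s+1}-\hat x\|^2=\|\operatorname{prox}_X(w_s-\gamma_s\tilde G_s)-\operatorname{prox}_X(\hat x)\|^2$ and using nonexpansiveness gives $\|w_{s+1}-\hat x\|^2\le\|w_s-\hat x\|^2-2\gamma_s\langle\tilde G_s,w_s-\hat x\rangle+\gamma_s^2\|\tilde G_s\|^2$. Taking the conditional expectation given $\mathcal H_{s-1}^k$ and invoking \cref{variance} (unbiasedness $\mathbb E(\tilde G_s\mid\mathcal H_{s-1}^k)=G(w_s)$ and the second-moment bound $\mathbb E(\|\tilde G_s\|^2\mid\mathcal H_{s-1}^k)\le\sigma^2$) replaces $\tilde G_s$ by $G(w_s)$ in the inner product and bounds the last term by $\gamma_s^2\sigma^2$. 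The $\mu$-strong convexity from \cref{sc} yields $\langle G(w_s)-G(\hat x),w_s-\hat x\rangle\ge\mu\|w_s-\hat x\|^2$, which combined with the variational inequality for $\hat x$ gives $\langle G(w_s),w_s-\hat x\rangle\ge\mu\|w_s-\hat x\|^2$. Writing $a_s:=\mathbb E(\|w_s-\hat x\|^2)$ and taking full expectation, this produces the scalar recursion $a_{s+1}\le(1-2\mu\gamma_s)a_s+\gamma_s^2\sigma^2$.

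Next I would prove $a_s\le v/(s+\beta)$ by induction on $s$, which for $s=S^{k+1}$ is exactly \eqref{l1}. For the base case $s=1$ both $w_1=x^k$ and $\hat x$ lie in $X$, so $a_1\le d^2\le v/(1+\beta)$ by the choice $v\ge(\beta+1)d^2$ in \eqref{v}. For the inductive step I substitute $\gamma_s=\tau_s\eta^k/(s+\beta)$ and the hypothesis $a_s\le v/(s+\beta)$ into the recursion; here the condition $\beta>2\mu\eta\bar\tau-1$ guarantees $0\le 1-2\mu\gamma_s<1$, so that replacing $a_s$ by its upper bound preserves the inequality. After clearing denominators, the desired bound $a_{s+1}\le v/(s+1+\beta)$ reduces to $\tfrac{s+\beta}{s+1+\beta}\,v\le 2\mu\tau_s\eta^k v-\tau_s^2(\eta^k)^2\sigma^2$, and since $\tfrac{s+\beta}{s+1+\beta}<1$ it suffices to show $v\ge \phi(\tau_s\eta^k)$ with $\phi(t):=\sigma^2t^2/(2\mu t-1)$.

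Finally I would verify this last bound uniformly in $s$, which is the one genuinely delicate point. The constraint $\eta^k>1/(\mu\underline\tau)$ together with $\tau_s>\underline\tau$ forces $\mu\tau_s\eta^k>1$, so $\phi$ has positive denominator and, since $\phi'(t)=\sigma^2\cdot 2t(\mu t-1)/(2\mu t-1)^2>0$ there, $\phi$ is increasing on the relevant range. Hence $\phi(\tau_s\eta^k)\le\phi(\bar\tau\eta)=\frac{\bar\tau^2\eta^2\sigma^2}{2\mu\bar\tau\eta-1}\le\frac{\bar\tau^2\eta^2\sigma^2}{2\mu\underline\tau\eta-1}$, the last step using $\underline\tau<\bar\tau$ and $2\mu\underline\tau\eta-1>0$ (again from $\eta>1/(\mu\underline\tau)$); this is precisely the first term in the definition of $v$ in \eqref{v}. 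Both requirements on $v$ therefore hold and the induction closes at $s=S^{k+1}$. I expect the main obstacle to be exactly this uniform control of $\phi(\tau_s\eta^k)$: one must exploit the monotonicity of $\phi$ on $\{\mu t>1\}$ and the interplay of the parameter bounds $\underline\tau,\bar\tau,\eta$ so that a single $s$-independent constant $v$ dominates the per-step noise contribution while the parameter choice simultaneously keeps the contraction factor $1-2\mu\gamma_s$ within $[0,1)$.
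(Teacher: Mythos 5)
Your proposal is correct and follows essentially the same route as the paper's proof: the identical one-step recursion $a_{s+1}\le(1-2\mu\gamma_s)a_s+\gamma_s^2\sigma^2$ obtained from nonexpansiveness of $\operatorname{prox}_X$, the unbiasedness and second-moment bounds of \cref{variance}, and strong convexity combined with the variational inequality at $\hat x^{k+1}$, followed by the same induction $a_s\le v/(s+\beta)$ with the same base case via the diameter $d$. The only (harmless) reorganization is that the paper carries the $k$-dependent constant $V(\eta^k)$ through the induction and only at the end invokes monotonicity of $\psi(\eta^k)$ to pass to $v$, whereas you run the induction directly with $v$ and establish uniformity via monotonicity of $\phi(t)=\sigma^2t^2/(2\mu t-1)$ on $\{\mu t>1\}$ — an equivalent argument.
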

	\begin{proof}
		For each $k$ and $1\le s\le S^{k+1}-1$, denote $ B_{s}^k:=\left\|w_{s}^k-\hat{w}^k\right\|^{2}$ and $b_{s}^k:=\mathbb{E}\left(B_{s}^k\right)= \mathbb{E}\left(\left\|w_{s}^k-\hat{w}^k\right\|^{2}\right)$,
		where $\hat{w}^k$ represents the optimal of \cref{alm1a}. By \cref{rmalm} and the non-expansion property of the proximal mapping and noting that $\operatorname{prox}_X(\hat{w}^k)=\hat{w}^k$, we have for each $k$ and $s$,
		\begin{eqnarray}
			B_{s+1}^k & = & \| \operatorname{prox}_X\{w_{s}^k-\gamma_{s}^k\nabla_{w}\tilde{L}(w_{s}^k, y^k,c^k,\xi_{s}^k,\zeta_{s}^k) \} -\hat{w}^k\|^{2} \nonumber\\ 
			&\leq& \|w_{s}^k-\gamma_{s}^k \nabla_{w}\tilde{L}(w_{s}^k,y^{k},c^k, \xi_{s}^k,\zeta_{s}^k )-\hat{w}^k\|^{2} \nonumber \\ 
			&=& B_{s}^k+ {\gamma_{s}^k}^{2}\|\nabla_{w}\tilde{L}(w_{s}^k,y^{k},c^k, \xi_{s}^k,\zeta_{s}^k )\|^{2}-2\gamma_{s}^k(w_{s}^k-\hat{w}^k)^{T}  \nabla_{w}\tilde{L}(w_{s}^k,y^{k},c^k, \xi_{s}^k,\zeta_{s}^k ).\label{l2}
		\end{eqnarray}
		Under Assumption \ref{variance}, since $w_s^k$ is $\mathcal{H}_{s-1}^{k}$-measurable, we obtain that for each $k$ and $s$,
		\begin{eqnarray}
			&&\mathbb{E}\big((w_{s}^k-\hat{w}^k)^{T} \nabla_{w}\tilde{L} (w_{s}^k,y^{k},c^k, \xi_{s}^k,\zeta_{s}^k )\big) \nonumber\\
			&=&\mathbb{E}\big(\mathbb{E}((w_{s}^k-\hat{w}^k)^{T} \nabla_{w}\tilde{L} (w_{s}^k,y^{k},c^k, \xi_{s}^k,\zeta_{s}^k )\mid\mathcal{H}_{s-1}^{k}) \big)\nonumber\\
			&=&	\mathbb{E}\big((w_{s}^k-\hat{w}^k)^{T}\mathbb{E}( \nabla_{w}\tilde{L} (w_{s}^k,y^{k},c^k, \xi_{s}^k,\zeta_{s}^k )\mid\mathcal{H}_{s-1}^{k}) \big)\nonumber\\
			&=&\mathbb{E}\big((w_{s}^k-\hat{w}^k)^{T}  \nabla_{w}L (w_{s}^k,y^{k},c^k)\big). \label{l3}
		\end{eqnarray} 
		It then follows from the $\mu$-strong convexity of $L(w, y^k , c^k) $ that the minimizer $\hat{w}^k$ is unique, and for each $k$,
		\begin{equation}\label{convex}
			(w-\hat{w}^k)^{T}\big(\nabla_w L(w, y^k,c^k)-\nabla_w L(\hat{w}^k, y^k,c^k) \big) \geq \mu\|w-\hat{w}^k\|^{2} \quad \forall\, w\in X.
		\end{equation}
		Since $\hat{w}^k$ is the optimal of \cref{alm1a}, we have $(w-\hat{w}^k)^{T}\nabla_w L(\hat{w}^k, y^k,c^k) \geq 0$ for any $w\in X$. Thus, it follows from (\ref{convex}) that $(w-\hat{w}^k)^{T} \nabla_w L(w, y^k,c^k)  \geq \mu\|w-\hat{w}^k\|^{2}$. Therefore, we have for each $k$ and $s$,
		\begin{equation}\label{strong}
			\mathbb{E}\big((w_{s}^k-\hat{w}^k)^{T} \nabla_w L(w_s^k, y^k, c^k)  \big) \geq \mu \mathbb{E}\big(\|w_{s}^k-\hat{w}^k\|^{2}\big)= \mu b_{s}^k .	
		\end{equation}
		Due to \cref{variance}, we have that for each $k$ and $s$,
		\begin{equation}\label{ll}
			\begin{aligned}
				\mathbb{E}\big(\|\nabla_{w}\tilde{L}(w_{s}^k,y^{k},c^k, \xi_{s}^k,\zeta_{s}^k )\|^{2}\big)  =\mathbb{E}\big( \mathbb{E}\big(\|\nabla_{w}\tilde{L}(w_{s}^k,y^{k},c^k, \xi_{s}^k,\zeta_{s}^k )\|^{2}\mid \mathcal{H}_{s-1}^k\big) \big)\leq \sigma^2.
			\end{aligned}
		\end{equation}
		By taking the expectation of both sides of $(\ref{l2})$,  we know from \eqref{l3}, \cref{strong} and \cref{ll} that for each $k$ and $s$,
		\begin{equation}\label{l5}
			b_{s+1}^k \leq (1-2 \mu \gamma_{s}^k)b_{s}^k+ (\gamma_{s}^k)^{2} { \sigma}^{2}.
		\end{equation}
		
		Next, we shall show the following inequality by induction
		\begin{equation}\label{key1}
			b_{s}^k \leq \frac{V(\eta^k)}{s+\beta},\quad  s=1,2,\ldots
		\end{equation}
		where $V(\eta^k)= \max \big\{\frac{(\eta^k)^{2}\bar{\tau}^2 { \sigma}^{2}}{2 \mu \eta^k\underline{\tau}-1},(\beta+1)d^{2} \big\}$. In fact, for $s=1$, it is clear from the definition of $V(\eta^k)$ that \eqref{key1} holds. Suppose \cref{key1} holds for $s \geq 1$. Denote $\hat{s}:=s+\beta $. By noting that $1-\displaystyle{\frac{ 2 \mu \eta^k\tau_s }{\hat{s}}}\geq 0 $ (since $\beta >2\mu\eta\bar{\tau}-1$) and $\hat{s}^2\geq (\hat{s}+1)(\hat{s}-1)$, we obtain from \cref{l5} that
		$$
		\begin{aligned}
			b_{s+1}^k&\leq(1-\frac{ 2 \mu \eta^k\tau_s }{\hat{s}})b_s^k+\frac{ ({\eta^k})^{2}\tau_s^2 { \sigma}^{2}}{ \hat{s}^2}  
			\leq(1-\frac{ 2 \mu \eta^k\tau_s }{\hat{s}}) \frac{V(\eta^k)}{\hat{s}}+\frac{ ({\eta^k})^{2}\tau_s^2  { \sigma}^{2}}{ \hat{s}^2} \\
			&\leq (\frac{ \hat{s}-2 \mu \eta^k\tau_s }{\hat{s}^2})V(\eta^k)+\frac{ ({\eta^k})^{2}\tau_s^2  { \sigma}^{2}}{ \hat{s}^2} 
			\leq (\frac{ \hat{s}-1 }{\hat{s}^2})V(\eta^k)- \frac{2 \mu \eta^k\tau_s-1}{\hat{s}^2}V(\eta^k)+\frac{ ({\eta^k})^{2}\tau_s^2  { \sigma}^{2}}{ \hat{s}^2}\\ 
			&\leq \frac{V(\eta^k)}{\hat{s}+1}. 
		\end{aligned}  
		$$
		Thus, we know the inequality \eqref{key1} holds.
		
		Let $\psi(\eta^k) = \frac{({\eta^k})^{2}\bar{\tau}^2 { \sigma}^{2}}{2 \mu \eta^k\underline{\tau}-1}$. By noting that $\psi'(\eta^k)>0$ for all $\eta^k> 1/\mu\underline{\tau}$, we have $\psi(\eta^k)<\psi(\eta)$ since $\eta^k<\eta$. Thus, we obtain that 
		$$
		b_{s}^k \leq \frac{v}{s+\beta}.
		$$
		By noting that $x^{k+1} = w_{S^{k+1}}^k $ and $\hat{x}^{k+1}=\hat{w}^k$, we then obtain \cref{l1}. This completes the proof.
	\end{proof}

	The following theorem is on the non-asymptotic convergence property of the sequence $\{(x^k,y^k)\}$ with a fixed  {subproblem iteration number}, i.e., for each $k$, $S^k\equiv S$, where $1<S$ is a given integer.
	\begin{theorem}\label{SGDALM}
		Let $\{(x^k,y^k)\}$ be the sequence generated by \cref{alg}. Suppose that \cref{ass2}, \cref{lipschitz}, \cref{sc} and \cref{variance} hold.  Let $T_l$ be defined by \eqref{eq:def-Tl}. Suppose that $T_l^{-1}$ is Lipschitz continuous at origin with modulus $a_l>0$. For each $k$ and $1\le s\le S^{k+1}-1$, let $\gamma_s^k=\tau_s\eta^k/(n+\beta)$ with $\underline{\tau}< \tau_s < \bar{\tau}$, $1/(\mu\underline{\tau})<\eta^k <\eta $ and $\beta >2\mu\eta\bar{\tau}-1$. Let $c>0$ be such that $\theta:=(1+\alpha c)^{-2} <\frac{1}{2}$ and $c^k \equiv c$ for each $k$. Denote $\rho:=2\theta<1$ and ${\theta}':=  {\big[\frac{(2+\alpha c)a_l}{c+\alpha {c}^2}\big]}^2 $. Then, we have for each $k$,
		\begin{equation} \label{e1}
			\mathbb{E}\big(\|y^{k}-y^\ast\|^2\big) \leq   \frac{2c^2 L_h^2v}{S^k}  +2\theta \mathbb{E}\big(\|y^{k-1}-y^\ast\|^2\big)
		\end{equation}and
		\begin{equation} \label{e2}
			\mathbb{E}\big(\|x^{k}-x^\ast\|^2\big)\leq  \frac{2v}{S^k} + 2\theta'\mathbb{E}\big(\|y^{k-1}-y^\ast\|^2\big),
		\end{equation}
		where $v$ is the constant given by \eqref{v}.
		
		If we further assume $S^k \equiv S$ for all $k$ with a given integer $S>1$, then the following inequalities hold for each $k$,
		\begin{equation} \label{salm}
			\mathbb{E}\big(\|y^{k}-y^\ast\|^2\big)\leq  \frac{2c^2L_h^2v}{(1-\rho)S}  +\rho^{k} \|y^{0}-y^\ast\|^2
		\end{equation}and
		\begin{equation} \label{salm2}
			\mathbb{E}\big(\|x^{k}-x^\ast\|^2\big) \leq  \frac{2v}{S} +\frac{4c^2 L_h^2\theta'v}{(1-\rho)S } +2\theta'\rho^{k-1}  \|y^{0}-y^\ast\|^2.
		\end{equation}
	\end{theorem}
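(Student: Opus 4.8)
The plan is to build the two one-step recursions \eqref{e1} and \eqref{e2} by comparing the inexact iterates $(x^k,y^k)$ with their exact counterparts $(\hat{x}^k,\hat{y}^k)$, and then to unroll the resulting linear recursion into the explicit geometric bounds \eqref{salm} and \eqref{salm2}. The whole argument is a routine combination of \cref{rm} (the non-asymptotic inexactness estimate) and \cref{exactalm} (the primal and dual contraction estimates), so the work is in the bookkeeping rather than in any new idea.

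First I would establish \eqref{e1}. Since both $y^k$ and $\hat{y}^k$ are obtained by projecting $y^{k-1}+c\,h(\cdot)$ onto $\mathbb{R}_+^M$ (at $x^k$ and $\hat{x}^k$, respectively), the non-expansiveness of the projection together with the Lipschitz continuity of $h$ in \cref{ass2} gives $\|y^k-\hat{y}^k\|\le cL_h\|x^k-\hat{x}^k\|$. Applying the elementary inequality $\|a+b\|^2\le 2\|a\|^2+2\|b\|^2$ to the splitting $y^k-y^\ast=(y^k-\hat{y}^k)+(\hat{y}^k-y^\ast)$, taking expectations, and then invoking \eqref{l1} to bound $\mathbb{E}(\|x^k-\hat{x}^k\|^2)\le v/S^k$ and \eqref{y} to bound $\|\hat{y}^k-y^\ast\|^2\le\theta\|y^{k-1}-y^\ast\|^2$ (recall $c^k\equiv c$, so $\theta^{k}\equiv\theta$) yields \eqref{e1}. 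The analogous splitting $x^k-x^\ast=(x^k-\hat{x}^k)+(\hat{x}^k-x^\ast)$, combined with \eqref{l1} and the primal contraction \eqref{x}, namely $\|\hat{x}^k-x^\ast\|^2\le\theta'\|y^{k-1}-y^\ast\|^2$, delivers \eqref{e2}.

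For the explicit bounds, set $S^k\equiv S$ so that \eqref{e1} becomes the linear recursion $a_k\le b+\rho\,a_{k-1}$ with $a_k:=\mathbb{E}(\|y^k-y^\ast\|^2)$, $b:=2c^2L_h^2v/S$, and $\rho=2\theta$; here the hypothesis $\theta<\tfrac12$ is precisely what guarantees $\rho<1$. Iterating and summing the geometric series $\sum_{j=0}^{k-1}\rho^j\le 1/(1-\rho)$, together with the deterministic initial value $a_0=\|y^0-y^\ast\|^2$, produces \eqref{salm}. Substituting this bound for $\mathbb{E}(\|y^{k-1}-y^\ast\|^2)$ into \eqref{e2} (with $S^k\equiv S$) and collecting terms then gives \eqref{salm2}.

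I do not expect a serious obstacle. The only point demanding care is the factor of $2$ introduced by the inequality $\|a+b\|^2\le 2\|a\|^2+2\|b\|^2$: it is what forces the contraction factor to be $\rho=2\theta$ rather than $\theta$, and hence what makes the sharper requirement $\theta<\tfrac12$ (rather than merely $\theta<1$) necessary for the geometric decay. Beyond this, one must simply keep the index shifts consistent—for instance, noting that \eqref{l1} applied at step $k$ reads $\mathbb{E}(\|x^k-\hat{x}^k\|^2)\le v/(S^k+\beta)\le v/S^k$, and that $\theta,\theta'$ are the constant values of $\theta^k,{\theta^k}'$ obtained from $c^k\equiv c$.
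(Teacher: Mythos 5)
Your proposal is correct and follows essentially the same route as the paper's proof: the same splitting $\|a+b\|^2\le 2\|a\|^2+2\|b\|^2$ against the exact iterates $(\hat{x}^k,\hat{y}^k)$, the same use of the projection's non-expansiveness with \cref{ass2} to get $\|y^k-\hat{y}^k\|\le cL_h\|x^k-\hat{x}^k\|$, the bounds from \cref{rm} and \cref{exactalm}, and the same unrolling of the resulting recursion with $\rho=2\theta<1$ followed by substitution of \eqref{salm} into \eqref{e2}. Your remarks on the factor of $2$ forcing $\theta<\tfrac12$ and on $v/(S^k+\beta)\le v/S^k$ are exactly the bookkeeping the paper performs implicitly.
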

	\begin{proof}
		It follows from \cref{exactalm}, \cref{rm} and \cref{ass2} that for each $k$,
		$$
		\begin{aligned}
			\mathbb{E}\big(\|y^{k}-y^\ast\|^2\big) & \leq 2\mathbb{E}\big(\|y^{k}-\hat{y}^{k}\|^2\big)+2\mathbb{E}\big(\|\hat{y}^{k}-y^\ast\|^2\big) \\
			&\leq 2\mathbb{E}\big(\|y^{k}-\hat{y}^{k}\|^2\big)+ 2\theta \mathbb{E}\big(\|y^{k-1}-y^\ast\|^2\big) \\
			& \leq 2\mathbb{E}\big(\|y^{k-1}+ch(x^{k})-y^{k-1}- ch(\hat{x}^{k}) \|^2\big)+2\theta \mathbb{E}\big(\|y^{k-1}-y^\ast\|^2\big) \\
			& \leq 2c^2L_h^2\mathbb{E} \big(\|x^{k}-\hat{x}^{k}\|^2\big)+2\theta \mathbb{E}\big(\|y^{k-1}-y^\ast\|^2\big) \\
			& \leq 2c^2 L_h^2 \frac{v}{S^k}  +2\theta \mathbb{E}\big(\|y^{k-1}-y^\ast\|^2\big), 
		\end{aligned}
		$$
		which implies \cref{e1} holds.
		Using \cref{exactalm} and \cref{rm} again, we obtain that for each $k$,
		\begin{equation*} 
			\begin{aligned}
				\mathbb{E}\big(\|x^{k}-x^\ast\|^2\big) & \leq 2\mathbb{E}\big(\|x^{k}-\hat{x}^{k}\|^2\big)+2\mathbb{E}\big(\|\hat{x}^{k}-x^\ast\|^2\big) \quad \\
				& \leq 2\mathbb{E}\big(\|x^{k}-\hat{x}^{k}\|^2\big)+ 2\theta' \mathbb{E}\big(\|y^{k-1}-y^\ast\|^2\big) \\
				& \leq  \frac{2v}{S^k} + 2\theta'\mathbb{E}\big(\|y^{k-1}-y^\ast\|^2\big), 
			\end{aligned}
		\end{equation*}
		which implies \eqref{e2} holds. In addition, if  $S^k \equiv S$ for all $k$, since $\rho=2\theta=2(1+\alpha c)^{-2} <1 $, we obtain from the recursion of \cref{e1} that for each $k$,
		$$
		\mathbb{E}\big(\|y^{k}-y^\ast\|^2\big) \leq  \frac{2c^2L_h^2}{1-\rho} \frac{v}{S}  +\rho^{k} \|y^{0}-y^\ast\|^2.
		$$	
		Thus, we know that \cref{salm} holds. It then follows from the recursion of \cref{e2} and \cref{salm} that for each $k$,
		\begin{equation*} 
			\mathbb{E}\big(\|x^{k}-x^\ast\|^2\big)   \leq  \frac{2v}{S} +\frac{4c^2 L_h^2\theta'}{1-\rho } \frac{v}{S}+2\theta' \rho^{k-1} \|y^{0}-y^\ast\|^2 .
		\end{equation*}
		This completes the proof.
	\end{proof}

	From \cref{SGDALM}, we know that the RMALM (\cref{alg}) converges with the fixed  {subproblem} iteration number $S^k\equiv S$. Roughly speaking, the equation \cref{salm} and \cref{salm2} imply that $\mathbb{E}(\|y^{k}-y^\ast\|^2)$ and $\mathbb{E}(\|x^{k}-x^\ast\|^2)$ converge at the rate of $\mathcal{O}(1/S)$ in terms of $S$. Therefore, the $\varepsilon$-solution (i.e., $\mathbb{E}(\|x^k-x^\ast\|^2) < \varepsilon$, $\mathbb{E}(\|y^k-y^\ast\|^2) < \varepsilon$) can be obtained by choosing $k =\mathcal{O}(\log\frac{1}{\varepsilon} )$ and $S = \mathcal{O}(\frac{1}{\varepsilon})$. Overall,  the total iteration number to obtain a $\varepsilon$-solution is $\mathcal{O}(\frac{1}{\varepsilon}\log\frac{1}{\varepsilon})$. On the other hand, the convergence results obtained in \cref{SGDALM} imply that the  {subproblem} iteration number $S$ must converge to infinity, which is unpractical. However, we do not need $S^k$ to be very large at the beginning of the ALM algorithm. In intuition, as the algorithm proceeds, the subproblem of the ALM will require higher accuracy, which means more iterations. Therefore, we can set $S^k$ to get progressively larger, and obtain a practical complexity by choosing a more suitable  {subproblem} iteration number $S^k$. Below we present the convergence results of RMALM with the increasing $S^k$.
	
	\begin{theorem}\label{diminish}
		Suppose the conditions in \cref{SGDALM} hold. Let $S^k = \lceil S^0\rho^{-k(1+q)}\rceil $,  where $S^0>1$, $q>0$ are given constants and $\lceil a\rceil$ denotes the smallest integer larger than $a$ for any number $a$. Let $\{(x^k,y^k)\}$ be the sequence generated by \cref{alg}. Then we have the linear convergence rate of $\{y^k\}$
		\begin{equation} \label{rate}
			\mathbb{E}\big(\|y^{k}-y^\ast\|^2\big) = D\rho^k,
		\end{equation}
		where $D := \displaystyle{\frac{2c^2 L_h^2v\rho^{q}}{S^{0}(1-\rho^{q})} \frac{}{}+\|y_{0}-y^\ast\|^2}$.
		
		The $\varepsilon$-solution for $x^{k}$ and $y^{k}$, that satisfies $\mathbb{E}(\|x^{k}-x^\ast\|^2) <  \varepsilon$ and $\mathbb{E}(\|y^{k}-y^\ast\|^2) <  \varepsilon$ respectively, need $\mathcal{O}((\frac{1}{\varepsilon})^{1+q})$ iterations both.
	\end{theorem}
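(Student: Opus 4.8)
The plan is to convert the one-step recursion \cref{e1} from \cref{SGDALM} into a closed-form geometric decay estimate by exploiting the prescribed growth of $S^k$. First I would note that $S^k=\lceil S^0\rho^{-k(1+q)}\rceil\ge S^0\rho^{-k(1+q)}$, so the sampling-error term obeys $1/S^k\le \rho^{k(1+q)}/S^0$. Writing $a_k:=\mathbb{E}(\|y^{k}-y^\ast\|^2)$, recalling $\rho=2\theta$, and setting $C:=2c^2L_h^2v/S^0$, the inequality \cref{e1} collapses to the scalar recursion
\[
a_k\le C\rho^{k(1+q)}+\rho\,a_{k-1}.
\]

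Next I would unroll this recursion down to $a_0=\|y^{0}-y^\ast\|^2$. Each step contributes a term $\rho^{k-i}C\rho^{i(1+q)}=C\rho^{k+iq}$, giving
\[
a_k\le \rho^k a_0 + C\rho^k\sum_{i=1}^{k}\rho^{iq}.
\]
Since $\rho<1$ forces $\rho^q<1$, the geometric sum is bounded by $\rho^q/(1-\rho^q)$, and factoring out $\rho^k$ yields precisely $a_k\le D\rho^k$ with $D=C\rho^q/(1-\rho^q)+\|y^{0}-y^\ast\|^2$, which is \cref{rate}. For the primal sequence I would substitute this bound into \cref{e2}, obtaining $\mathbb{E}(\|x^{k}-x^\ast\|^2)\le (2v/S^0)\rho^{k(1+q)}+2\theta' D\rho^{k-1}$, whose right-hand side is again $\mathcal{O}(\rho^k)$ because the first term decays strictly faster.

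For the complexity count I would first fix the outer-iteration budget: to guarantee $\mathbb{E}(\|y^{k}-y^\ast\|^2)<\varepsilon$ (and likewise for $x^k$) it suffices that $D\rho^k<\varepsilon$, i.e.\ $k\ge \log(D/\varepsilon)/\log(1/\rho)=\mathcal{O}(\log(1/\varepsilon))$. The total number of inner iterations is $\sum_{j=1}^{k}S^j$; since $S^j\le S^0\rho^{-j(1+q)}+1$, this is a geometric sum with ratio $\rho^{-(1+q)}>1$ and is therefore dominated by its last term, of order $\rho^{-k(1+q)}$. Substituting the chosen $k$ gives $\rho^{-k(1+q)}=(\rho^{-k})^{1+q}=\mathcal{O}((1/\varepsilon)^{1+q})$, the asserted total complexity.

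The step I expect to demand the most care is the geometric unrolling itself: a naive induction carrying the \emph{fixed} constant $D$ does not close, because it leaves behind an extra $C\rho^{k(1+q)}$ term. One must either unroll explicitly and evaluate $\sum_{i}\rho^{iq}$ as above, or strengthen the inductive hypothesis to track the monotonically increasing coefficient $A_k=a_0+C\sum_{i\le k}\rho^{iq}\uparrow D$. Once the recursion is reduced to scalar form, the remaining estimates—bounding the geometric series and identifying the dominant term of $\sum_j S^j$—are routine.
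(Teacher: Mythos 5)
Your proposal is correct and takes essentially the same route as the paper's proof: you unroll the recursion \cref{e1}, bound $1/S^k\le\rho^{k(1+q)}/S^0$, sum the geometric series $\sum_{i}\rho^{iq}\le\rho^{q}/(1-\rho^{q})$ to arrive at exactly the paper's constant $D$, substitute the dual rate into \cref{e2} for the primal bound, and count the total inner-iteration budget $\sum_{j\le K}S^{j}$ as a geometric sum dominated by $\rho^{-(1+q)K}=\mathcal{O}((1/\varepsilon)^{1+q})$. The only (shared, cosmetic) caveat is that \cref{rate} should read as an inequality $\mathbb{E}(\|y^{k}-y^\ast\|^{2})\le D\rho^{k}$, which is what both you and the paper actually establish.
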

	\begin{proof}
		It follows from the recursion of \cref{e1} that for each $k$,
		\begin{equation*}
			\mathbb{E}\big(\|y^{k}-y^\ast\|^2\big) \leq  2c^2  L_{h}^2 v \big(\frac{1}{ S^{k} }+\frac{\rho}{ S^{k-1} }+\cdots+\frac{\rho^{k-1}}{ S^1 }\big)+\rho^{k} \|y_{0}-y^\ast\|^2.
		\end{equation*}
		Let $S^k = \lceil S^0\rho^{-k(1+q)}\rceil$, where $S^0>1$ and $q>0$ are given constants. We have
		\begin{equation*}
			\begin{aligned}
				\mathbb{E}\big(\|y^{k}-y^\ast\|^2 \big)&\leq \big(2c^2 L_h^2 \frac{v}{S^{0}} (  \rho^{kq} +\cdots+\rho^q) +\|y_{0}-y^\ast\|^2\big)\rho^{k} \\
				& \leq \big(2c^2 L_h^2  \frac{v}{S^{0}} \frac{\rho^{q}}{1-\rho^{q}}+\|y_{0}-y^\ast\|^2\big)\rho^{k},
			\end{aligned}
		\end{equation*}
		which implies that the $y^k$ has a linear convergence rate \cref{rate}.
		
		Under the requirement $\mathbb{E}(\|y^{k}-y^\ast\|^2) \leq \varepsilon$, we have 
		$		  k \geq (\ln\rho)^{-1}\ln \frac{\varepsilon}{D}.
		$
		Sum up the total iterations for $K= (\ln\rho)^{-1}\ln \frac{\varepsilon}{D}$, we obtain that 
		\begin{equation}\label{dd}
			\begin{aligned}
				\sum_{k=1}^{K} S^{k} =&\sum_{k=1}^{K} \lceil S^0\rho^{-k(1+q)}\rceil 
				\leq S^{0} \sum_{k=1}^{K}[\rho^{-(1+q)}]^{k}+K 
				= S^{0} \frac{\rho^{-(1+q)}(1-\rho^{-(1+q) \cdot K})}{1-\rho^{-(1+q)}} +K \\
				=&\frac{S^{0} \cdot \rho^{-(1+q)}}{\rho^{-(1+q)}-1} \rho^{-(1+q) \cdot K}-\frac{S^{0}\rho^{-(1+q)}}{\rho^{-(1+q)}-1}+K  =\mathcal{O}(\rho^{-(1+q) K}) +\mathcal{O}(K)\\
				=&\mathcal{O}(\rho^{-(1+q) \cdot (\ln\rho)^{-1}\ln \frac{\varepsilon}{D}}  ) +\mathcal{O}(\ln\frac{1}{\epsilon} )
				=\mathcal{O}\big((\frac{1}{\varepsilon})^{1+q}\big),
			\end{aligned}
		\end{equation}
		which implies the $\mathcal{O}((\frac{1}{\varepsilon})^{1+q})$ iteration complexity of $\mathbb{E}(\|y^{k-1}-y^\ast\|^2)$.
		
		It follows from \cref{e2} that
		\begin{equation*}
			\begin{aligned}
				\mathbb{E}(\|x^{k}-x^\ast\|^2)&\leq  \frac{2v}{S^{k}} + 2\theta'\mathbb{E}(\|y^{k-1}-y^\ast\|^2)
				\leq \frac{2v}{S^0}\rho^{k(1+q)} + 2\theta'D\rho^{k-1}\leq 2(\frac{v}{S^0}+\frac{\theta'D}{\rho})\rho^{k}.
			\end{aligned}
		\end{equation*}		
		Denote $D_0:=2(\frac{v}{S^0}+\frac{\theta'D}{\rho})$. Under the requirement $\mathbb{E}(\|x^{k}-x^\ast\|^2) \leq \varepsilon$, we have 			$		  k \geq (\ln\rho)^{-1}\ln \frac{\varepsilon}{D_0}.$ Similar to \cref{dd}, sum up the total iterations for $K= (\ln\rho)^{-1}\ln \frac{\varepsilon}{D_0}$, we also obtain that 
		\begin{equation*}
			\sum_{k=1}^{K} S^{k} = \sum_{k=1}^{K} \lceil S^{0} \rho^{- k(1+q)} \rceil
			=\mathcal{O}(\rho^{-(1+q) \cdot (\ln\rho)^{-1}\ln \frac{\varepsilon}{D_0}})+\mathcal{O}(\ln\frac{1}{\epsilon} )  	=\mathcal{O}\big((\frac{1}{\varepsilon})^{1+q}\big),
		\end{equation*}
		which implies the $\mathcal{O}((\frac{1}{\varepsilon})^{1+q})$ iteration complexity of $\mathbb{E}(\|x^{k-1}-x^\ast\|^2)$.
	\end{proof}

	Two results are stated in \cref{diminish}. Firstly, by setting {$S^k =  \lceil S^0\rho^{-k(1+q)} \rceil$}, where $S^0>1$ and $q>0$ are given constants, we can guarantee the linear convergence rate of the RMALM without the stopping criteria. {Without verifying the stopping criteria, the whole algorithm is much simpler and practical.} This idea can also be used for other subproblem-solving algorithms. Moreover, we obtain the total complexity of $	\mathbb{E}(\|x^{k}-x^\ast\|^2)$ is arbitrarily close to $\mathcal{O}(1/\varepsilon)$.

\section{Numerical experiments}
\label{test}

This section tests the proposed method (RMALM) on the stochastic convex QCQP, a two-stage stochastic program, and a stochastic portfolio optimization problem. We compare  {our method} to four existing methods, the CSA method in \cite{lan2020algorithms}, the MSA in \cite{nemirovski2009robust}, the PDSG-adp method in \cite{xu2020primal} and the APriD method in \cite{yan2022adaptive}. In \cite{lan2020algorithms}, the output of CSA is the weighted average of ${x}^{t}$ over $t \in \mathcal{B}^{k}=\{t = 1,2,\ldots  {k} \mid \widehat{G}_{t} \leq \eta_{t}\}$. Note that $\mathcal{B}^{k}$ may be empty for a small $k$. Therefore, we also compute the weighted average of ${x}^{t}$ overall $t=1,2,\ldots  {k}$ as in \cite{yan2022adaptive} and name the results of CSA as CSA1, CSA2 respectively. The parameters in the stochastic convex QCQP are the same as in the experiments of \cite{yan2022adaptive}. Specifically, we take $s=1$ and $J_g = 100$ in CSA; $\beta_{1}=0.9, \beta_{2}=$ $0.99$ in APriD.  In our algorithm, we set $S^k = \lceil5\times1.7^{k(1+0.0001)}\rceil$. Other parameters are different in each experiment, and we take the optimal parameters according to the experiment for comparison. 

In all experiments, our comparisons  {contain} the objective value $f(x^k)$, the averaged constraint violation measured by $\frac{1}{M} \sum_{j=1}^{M}\left[h_{j}({x}^k)\right]_{+}$, the maximum constraint violation measured by $\max _{j \in\{1,\ldots,M\}}\left[h_{j}(x^k)\right]_{+}$, the iteration error measured by $\|x^k-x_{opt} \|^2$ and the averaged error $\|\bar{x}^k-x_{opt}\|^2$ which has the better performance for other algorithms, where $x_{opt}$ is the optimal solution in every experiment and $\bar{x}^k$ is  {a kind of average of the history iteration points according to the algorithms}. All the tests are performed in MATLAB R2021b installed on Linux with Intel Xeon(R) Gold 6230R CPU @ 2.10GHz.

\subsection{QCQP with expectation objective}
\label{qcqptest}
In this subsection, we test the algorithms on the stochastic convex QCQP in the following form:
\begin{equation} \label{qcqp}
	\begin{aligned}
		&\min _{{x} \in X}\ f(x)= \mathbb{E}\big(\frac{1}{2} \left\|\xi_H x-\xi_{c}\right\|^{2}\big),\\
		&\text { s.t. } h_j(x)=\frac{1}{2} x^{\top} Q_{j} x+a_{j}^{\top} x \leq b_{j},\ j=1, \ldots, M.
	\end{aligned}
\end{equation} 
Here $X=[-10,10]^{n}$, $\xi_H\in \mathbb{R}^{p\times n}$ and $\xi_{c} \in \mathbb{R}^p$ are randomly generated, and their components are generated by standard Gaussian distribution and then normalized. For each $j \in\{1,\ldots,M\}$, $Q_{j}\in \mathbb{R}^{n\times n}$ is a randomly generated symmetric positive semidefinite matrix with unit 2-norm; $a_j$ is randomly generated independently by the standard Gaussian distribution and then normalized; $b_{j}$ is generated from the uniform distribution on $[0.1,1.1]$. 

 {In the experiment, we test on  QCQP instances of size $(n, p) = (10, 5)$ and $(200, 150)$ and $M=5$ and $10000$, respectively. In both instances, we set batch size is 50 and run $5 \times 10^{4}$ iterations. For the instances with small data size, we solve the approximation problem for the generated $10^5$ samples by using CVX\cite{gb08,cvx} to obtain the optimal solution $x_{opt}$. When running the code, we obtain the unbiased estimate of the gradient and function values by sampling the random variable $\xi_H,\xi_c$ over the above distribution.} For the instances with large data sizes, we employ an estimated optimal solution $x_{opt}$ which has the smallest objective value in the feasible set among all iterations of RMALM, APriD, CSA, MSA, and PDSG-adp. 

In \cref{figexp}, we report the objective value, the averaged constraint violation, the maximum constraint violation, the last iteration error, and the averaged error by iteration and time. In the first four columns of the \cref{figexp}, the results of the other four algorithms are about $\bar{x}^k$, which have convergence guarantees. However, the results of our algorithm are only about the current iteration point $x^k$. The distance from the current iteration point $x^k$ to the optimal point for all algorithms is shown in the last column , and we can see that the other four algorithms start to oscillate at a certain distance from the optimal point and fail to converge to the optimal point, while our algorithm converges steadily to the optimal point. The running time for the algorithms in the different settings is shown as \cref{timeexp}. All results show that the RMALM outperforms the other methods in different stochastic convex QCQP instances.

\begin{table}[tbhp]\label{timeexp}
	\footnotesize
	\captionsetup{position=top} 
	\caption{ Running time (in seconds) for QCQP \cref{qcqp}.}
	\begin{center} 
		\begin{tabular}{|c|c|c|c|c|} \hline
			$(n,p)$  & \multicolumn{2}{c|}{$(10,5)$} & \multicolumn{2}{c|}{$(200,150)$} \\ \hline
			number of constraints & {$M=5$} & {$M=10000$}  & {$M=5$} & {$M=10000$}\\ \hline
			RMALM & {\bf 31.7} & {\bf 40.2}  & {\bf 9449.4} & {\bf 10798.9}   \\ \hline
			ApriD &  41.7 & 51.6 & 10829.0& 11724.5 \\ \hline
			MSA & 41.6 & 47.1    & 10661.5 & 11563.9  \\ \hline
			CSA & 40.8 & 53.2     & 10661.1 & 12453.6 \\ \hline
			PDSG\_adp & 42.8 & 44.2 & 10530.5 & 11010.2\\ \hline
		\end{tabular} 
	\end{center}
\end{table}

\begin{figure}[tbhp]\label{figexp}
	\centering
	\subfloat[$n=10$, $p=5$  and  $M=5$]{\label{exp:a}\includegraphics[width=10cm]{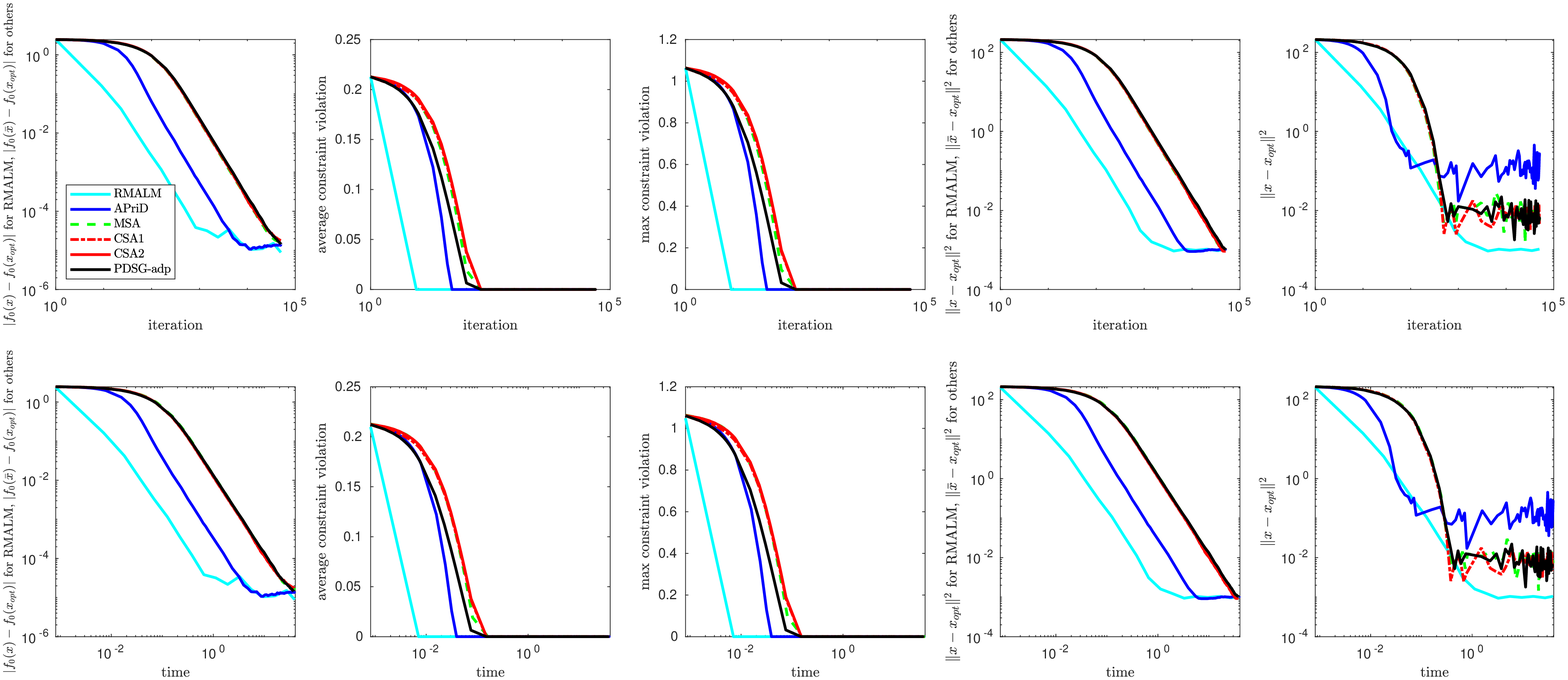}}
	
	\subfloat[$n=10$, $p=5$  and  $M=10000$]{\label{exp:b}\includegraphics[width=10cm]{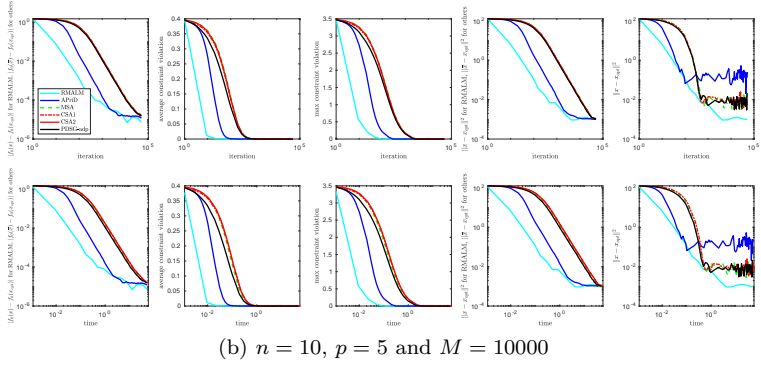}}
	
	\subfloat[$n=200$, $p=150$  and  $M=5$]{\label{exp:c}\includegraphics[width=10cm]{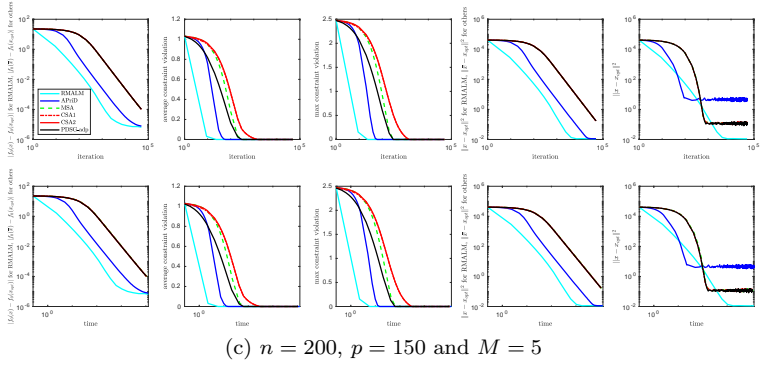}}
	
	\subfloat[$n=200$, $p=150$ and $M=10000$]{\label{exp:d}\includegraphics[width=10cm]{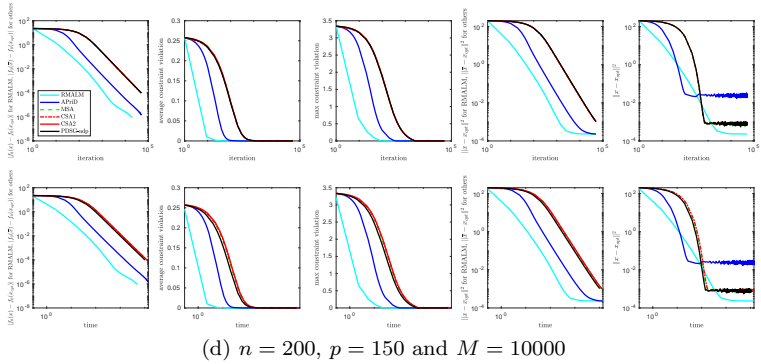}}
	
	\caption{In each subplot, the objective error (Left1), averaged constraint violation (Left2), maximum constraint violation (Middle), $x^k$ error for RMALM and $\bar{x}^k$ error for others (Right2), $x^k$ error (Right1) by five methods on solving QCQP instances of \cref{qcqp}. Rows 1 is with respect to iteration; rows 2 is with respect to time (in seconds).}
\end{figure}

%
%

\subsection{QCQP with finite-sum objective}
In this subsection, we test the algorithms on the QCQP with a finite-sum objective and some constraints:

\begin{equation} \label{qcqp2}
	\begin{aligned}
		&\min _{{x} \in X}\ f(x)= \frac{1}{2N} \sum_{i=1}^{N}\left\|H_i x-c_i\right\|^{2},\\
		&\text { s.t. } h_j(x)=\frac{1}{2} x^{\top} Q_{j} x+a_{j}^{\top} x \leq b_{j},\ j=1, \ldots, M.
	\end{aligned}
\end{equation} 
Here $X=[-10,10]^{n}$. $H_i, c_i\ (i=1,\ldots,N)$ are independently generated from the same distribution as $\xi_H,\xi_c$ in Section \ref{qcqptest} and $Q_{j},a_j,b_j\ (j=1,\ldots,M))$ are generated in the same way as in Section \ref{qcqptest}. In this experiment, we test on  QCQP instances of size $(n, p) = (10, 5)$ and $(200, 150)$ and $M=5$ and $10000$. In both instances, we set $N = 10^4$, $batchsize=50$ and run $5 \times 10^{4}$ iterations.

Optimal solution $x_{opt}$ is obtained in the same way as in Section \ref{qcqptest}. We record the errors and constraints violations in \cref{finite} and the running time for the algorithms in \cref{finitetime}. From the results, we can also notice the better performance of the RMALM for different data sizes and number of constraints.

\begin{figure}[tbhp]\label{finite}
	\centering
	\subfloat[$n=10$, $p=5$ and $M=5$]{\label{fig:a}\includegraphics[width=10cm]{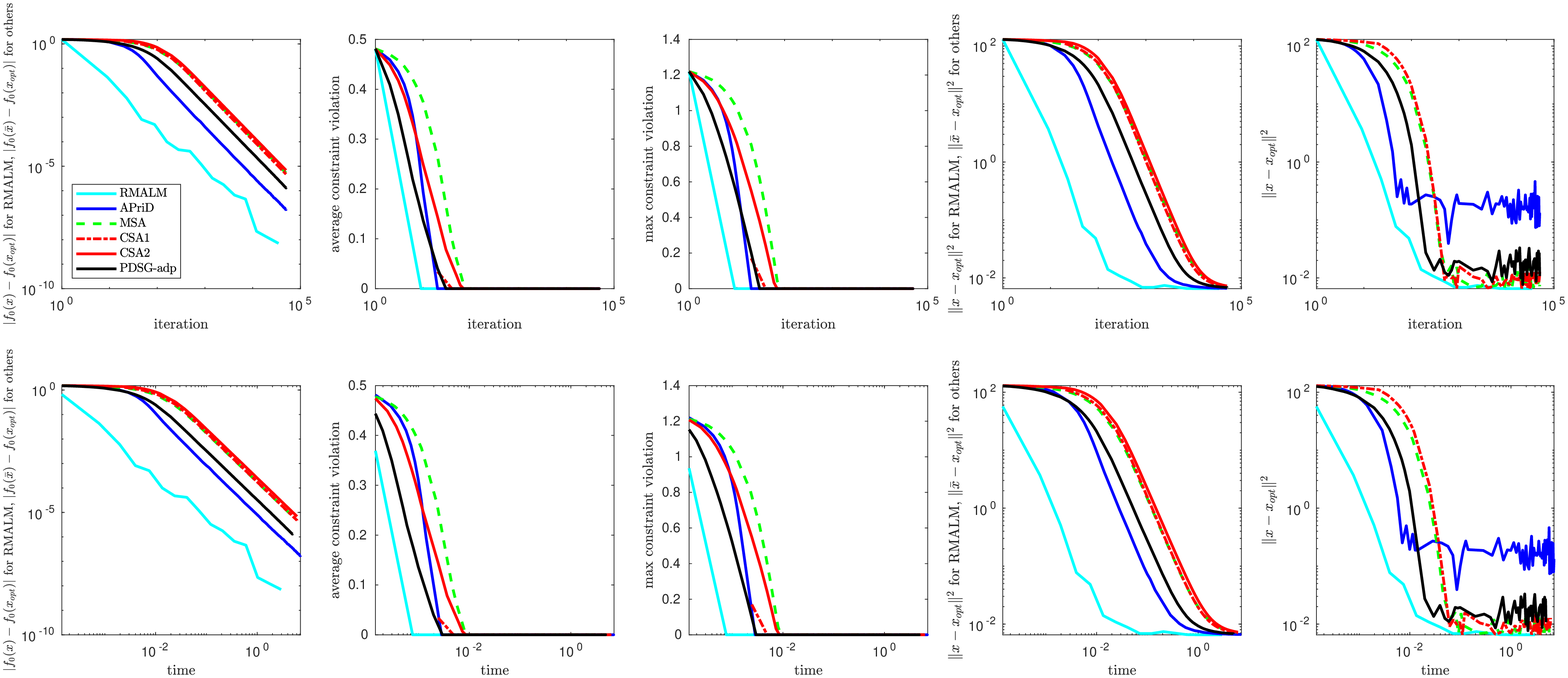}}
	
	\subfloat[$n=10$, $p=5$ and $M=10000$]{\label{fig:b}\includegraphics[width=10cm]{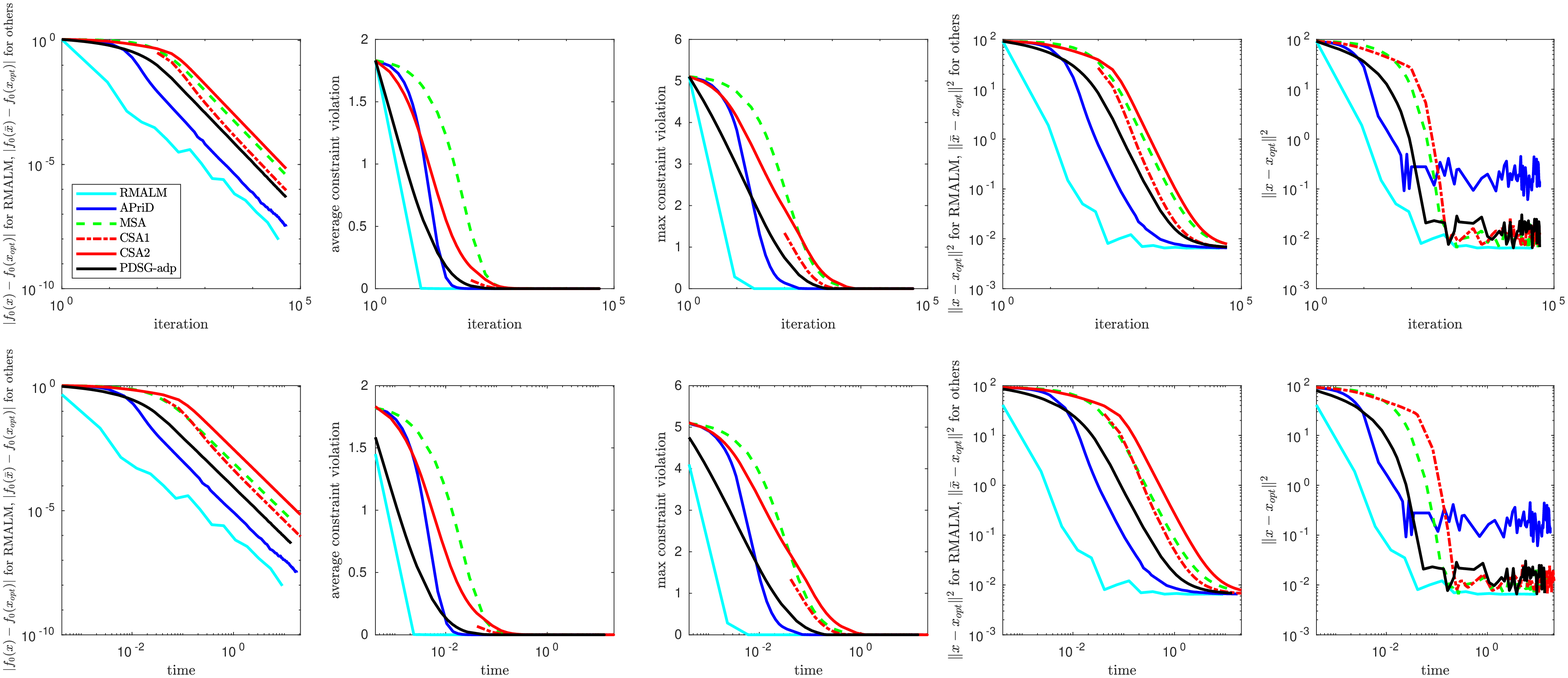}}
	
	\subfloat[$n=200$, $p=150$ and $M=5$]{\label{fig:c}\includegraphics[width=10cm]{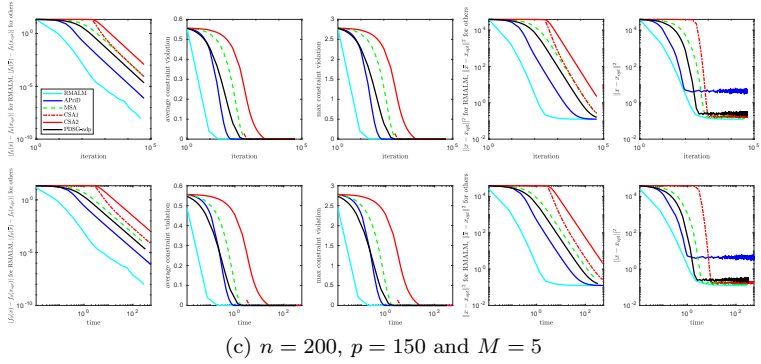}}
	
	\subfloat[$n=200$, $p=150$ and $M=10000$]{\label{fig:d}\includegraphics[width=10cm]{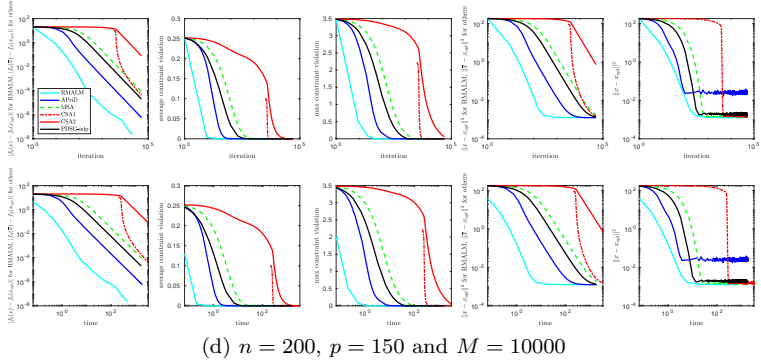}}
	
	\caption{In each subplot, the objective error (Left1), averaged constraint violation (Left2), maximum constraint violation (Middle), $x^k$ error for RMALM and $\bar{x}^k$ error for others (Right2), $x^k$ error (Right1) by five methods on solving QCQP instances of \cref{qcqp2}. Rows 1 is with respect to iteration; rows 2 is with respect to time (in seconds).}
\end{figure}

\begin{table}[tbhp]\label{finitetime}
	\footnotesize
	\captionsetup{position=top} 
	\caption{ Running time (in seconds) for QCQP \cref{qcqp2}.}\label{tab1}
	\begin{center}
		\begin{tabular}{|c|c|c|c|c|} \hline
			$(n,p)$  & \multicolumn{2}{c|}{$(10,5)$} & \multicolumn{2}{c|}{$(200,150)$} \\ \hline
			number of constraints & {$M=5$} & {$M=10000$}  & {$M=5$} & {$M=10000$}\\ \hline
			RMALM & {\bf 3.0} & {\bf 9.1} & {\bf 235.5} & {\bf 793.2} \\ \hline
			ApriD &   7.2 & 17.5 & 482.2&1798.2 \\ \hline
			MSA & 5.3 & 14.8  & 259.2 & 1558.0 \\ \hline
			CSA & 5.9 & 20.7 & 461.6 & 2579.9 \\ \hline
			PDSG\_adp & 5.0 & 13.6 & 251.2 & 1492.0  \\ \hline
		\end{tabular}
	\end{center}
\end{table}

\subsection{Two-stage stochastic program}
We perform the RMALM on a specific example of the two-stage stochastic program introduced in Section \ref{intro}. Given by \cite{guigues2021inexact}, the program is:
\begin{equation}\label{one}
	\begin{aligned}	 
		\min\limits_{x_{1} \in \mathbb{R}^{n}} &\ c^{T} x_{1}+\mathbb{E}\left(\mathfrak{Q}\left(x_{1}, \xi\right)\right) \\
		\mbox{s.t. } &\ \left\|x_{1}-x_{0}\right\|_{2} \leq 1,
	\end{aligned}
\end{equation} 
where cost-to-go function $\mathfrak{Q}\left(x_{1}, \xi\right)$ has nonlinear objective and constraint coupling functions and is given by
\begin{equation} \label{two}
	\begin{aligned}
		\mathfrak{Q}\left(x_{1}, \xi\right):=
		\min \limits_{x_{2} \in \mathbb{R}^{n}}&\ \frac{1}{2}\left(\begin{array}{c}
			x_{1} \\
			x_{2}
		\end{array}\right)^{T}\left(\xi \xi^{T}+\lambda I_{2 n}\right)\left(\begin{array}{c}
			x_{1} \\
			x_{2}
		\end{array}\right)+\xi^{T}\left(\begin{array}{l}
			x_{1} \\
			x_{2}
		\end{array}\right) \\ 
		\mbox{s.t.}\ &\  \frac{1}{2}\left\|x_{2}-y_{0}\right\|_{2}^{2}+\frac{1}{2}\left\|x_{1}-x_{0}\right\|_{2}^{2}-\frac{R^{2}}{2} \leq 0.
	\end{aligned}
\end{equation}
For both problems, $\xi\in \mathbb{R}^{2 n}$ is generated from the Gaussian distribution and $\lambda>0$. The components of $\xi$ are independent with means and standard deviations randomly generated in intervals $[5,25]$ and $[5,15]$. We consider  {two} instances of these problem with $n=5,30$ and a large sample of size $N = 20,000$ of $\xi$. We fix $\lambda=2$ while the components of $c$ are generated randomly in interval $[1,3]$.

For specific N, we transform the problem to a single quadratic program
\begin{equation}\label{single1}
	\begin{aligned}
		\min\limits_{x_1 ,y_1,\ldots,y_N } &\ c^{T} x_{1} + \frac{1}{N}\sum_{i=1}^{N} \frac{1}{2}\left(\begin{array}{c}
			x_{1} \\
			y_{i}
		\end{array}\right)^{T}\left(\xi \xi^{T}+\lambda I_{2 n}\right)\left(\begin{array}{c}
			x_{1} \\
			y_{i}
		\end{array}\right)+\xi^{T}\left(\begin{array}{l}
			x_{1} \\
			y_{i}
		\end{array}\right)\\
		\mbox{s.t.}\quad\ &\left\|x_{1}-x_{0}\right\|_{2} \leq 1,\  	\frac{1}{2}\left\|y_{i}-y_{0}\right\|_{2}^{2}+\frac{1}{2}\left\|x_{1}-x_{0}\right\|_{2}^{2}-\frac{R^{2}}{2} \leq 0,\quad i=1,2,\ldots,N,
	\end{aligned}
\end{equation}
where $y_i$ represents the second stage decision corresponding to $\xi_i$. For problem \cref{single1} we take $R=5$ and $x_{0}(i)=y_{0}(i)=10, i=1, \ldots, n$. In both instances, we set batch size $=100$ and run $5 \times 10^{4}$ iterations. 

We record the running time and optimal value in \cref{stagetabel}. All constraint violations in five algorithms reach $0$. It shows that the RMALM can get optimal value in a faster time.

\begin{table}[tbhp]\label{stagetabel}
	\footnotesize
	\caption{CPU time in seconds, approximate optimal value of instances about problems \cref{one}-\cref{two} (for $n=5$ or $30$ and $N=20000$)}
	\begin{center}
		\subfloat[$n=5$]{
			\begin{tabular}{|c|c|c|} \hline
				method &  time(s) & optimal value   \\ \hline
				RMALM & {\bf 16.8} & {\bf 170.78}   \\ \hline
				ApriD & 108.6 & 171.06   \\ \hline	
				MSA & 49.6 & 179.86    \\ \hline	
				CSA1 & 53.7 & 171.12    \\ \hline	
				CSA2 & 53.7& 171.65    \\ \hline	
				PDSG\_adp & 52.8 & 180.69   \\ \hline
		\end{tabular}}   $\quad$
		\subfloat[$n=30$]{
			\begin{tabular}{|c|c|c|} \hline
				method &  time(s) & optimal value   \\ \hline
				RMALM & {\bf 52.0} & {\bf 1941.41}    \\ \hline
				ApriD & 505.1  & 1944.32   \\ \hline	
				MSA & 137.1  &    2061.07 \\ \hline	
				CSA1 & 132.5  & 1969.41  \\ \hline	
				CSA2 & 132.5   &  1974.48  \\ \hline	
				PDSG\_adp &150.1   &  2077.19 \\ \hline
		\end{tabular} } 
	\end{center}
\end{table}

\subsection{Stochastic portfolio optimization}
We perform the RMALM on the portfolio optimization problem involving Conditional Value at Risk (CVaR) shown as \cref{cvar3} and \cref{cvar5} on the finite dataset
\begin{equation} \label{cvar5}
	\begin{aligned}
		\min _{a, x \in X, y}\ &a+\frac{1}{(1-p)N} \sum_{i=1}^{N} y_{i}\\
		\mbox{s.t.}\quad  &y_{i} \geq -\xi_{i}^Tx -a, \quad y_{i} \geq 0,\quad  i=1, \ldots, N,
	\end{aligned}
\end{equation}
where $x$ represents the portfolio, $\xi_i$ denotes the rate of return corresponding to the investment at the $i$-th sample,  {$p\in(0,1)$ is a safety (reliability) level chosen by users, $a$ is a threshold of loss,} $N$ represents the number of samples. Together with  {the feasible set} \cref{set}, we can rewrite \cref{cvar5} as:
\begin{equation} \label{cvar4}
	\begin{aligned}
		\min _{a, x, y}\ &a+\frac{1}{(1-p)N} \sum_{i=1}^{N} y_{i}\\
		\mbox{s.t.}\ &y_{i} \geq -x^T\xi_{i}-a, \quad y_{i} \geq 0,\quad i=1, \ldots, N,\\
		& -m^{T}x  \leq-R,\quad \sum_{j=1}^{n}x_j = 1,\quad 0\leq x_j \leq 1,
	\end{aligned}
\end{equation}
where  {$m := \mathbb{E}(\xi)$ is the average return, $R$ encodes a minimum desired return.} Without loss of generality, we set the desired return as the average return of overall assets in the training set, i.e.,  {$R := \mathrm{mean}(m)$}. 

We test on five different real portfolio datasets: Dow Jones industrial average (DJIA, with 30 stocks for 507 days), Standard \& Poor’s 500 (SP500, with 25 stocks for 1276 days), Toronto stock exchange (TSE, with 88 stocks for 1258 days), New York stock exchange (NYSE, with 36 stocks for 5651 days) which are also used in \cite{borodin2003can,yurtsever2016stochastic}; and one dataset Fama and French (FF100, 100 portfolios formed on size and book-to-market, 25,251 days from July 1926 to May 2022) which is commonly used in financial literature, e.g., \cite{brodie2009sparse,nakagawa2021taming}. We complete the missing data in FF100 using the K-nearest neighbor method with Euclidean distance.

In both instances, we set batch size $=100$ and run $5 \times 10^{4}$ iterations. Then, for the $p$-values 0.95, we calculated the $p$-CVaR of the optimal portfolio $x^*$ from the formulas in \cref{cvar4}, obtaining the results in \cref{cvartable}, which records the running time, approximate optimal value, and averaged constraint violation for different datasets. The table shows that the RMALM algorithm can optimize the objective function value in a faster time for the same magnitude of constraint violation in these five real datasets.
\begin{table}[tbhp]\label{cvartable}
	\footnotesize
	\begin{center}
		\caption{CPU time in seconds, approximate optimal value and averaged constraint violation of problem \cref{cvar4} (for DJIA, SP500, TSE, NYSE and FF100)}
		\subfloat[DJIA, $(N,n)=(507,30)$]{
			\begin{tabular}{|c|c|c|c|} \hline
				method &  time(s) & optimal value & averaged constraint violation \\ \hline
				RMALM & {\bf 1.16} & {\bf -0.9747} & {\bf 3.3e-6} \\ \hline
				ApriD & 2.53 & -0.9114 & 4.2e-6 \\ \hline	
				MSA &  2.13 &-0.9057&  6.2e-6\\ \hline	
				CSA1 & 3.41 & -0.8457 & 8.3e-5  \\ \hline	
				CSA2 & 3.41& -0.6794&    2.5e-4  \\ \hline	  
				PDSG\_adp & 2.10 & -0.9730& 7.4e-6 \\ \hline
		\end{tabular}}   
		
		\subfloat[SP500, $(N,n)=(1276,25)$]{
			\begin{tabular}{|c|c|c|c|} \hline
				method &  time(s) & optimal value & averaged constraint violation \\ \hline
				RMALM & {\bf 2.24} & {\bf -0.9499} & {\bf 1.1e-6}\\ \hline
				ApriD & 5.20 & -0.9283 &  1.4e-5\\ \hline	   %
				MSA &  3.69 & -0.8853&  1.2e-5\\ \hline	
				CSA1 & 5.99 & -0.8588 & 9.2e-6\\ \hline	
				CSA2 & 5.99& -0.6048&   5.0e-5 \\ \hline	
				PDSG\_adp & 3.67 & -0.9453& {\bf 1.1e-6}\\ \hline   %
		\end{tabular}}   
		
		\subfloat[TSE, $(N,n)=(1258,88)$]{
			\begin{tabular}{|c|c|c|c|} \hline
				method &  time(s) & optimal value & averaged constraint violation \\ \hline
				RMALM & {\bf 2.85} & {\bf -0.9650} & {\bf 7.1e-6}\\ \hline
				ApriD & 7.08 & -0.8777 & 8.9e-6 \\ \hline	
				MSA &  5.59 &-0.8633 &  9.1e-6\\ \hline	
				CSA1 & 10.76 &  -0.8763 & 1.3e-5  \\ \hline	
				CSA2 & 10.76 &  -0.6300&    1.9e-4 \\ \hline	 
				PDSG\_adp & 5.83 & -0.9590 &  7.3e-6\\ \hline   %
		\end{tabular}}  		
		
		\subfloat[NYSE, $(N,n)=(5651,36)$]{
			\begin{tabular}{|c|c|c|c|} \hline
				method &  time(s) & optimal value & averaged constraint violation \\ \hline
				RMALM & {\bf 3.98} & {\bf -1.0024} & {\bf 7.0e-6} \\ \hline
				ApriD & 14.36 & -0.9229 & 8.2e-6 \\ \hline	
				MSA &  6.61 &-0.5617 &  7.1e-6\\ \hline	
				CSA1 & 9.45 & -0.8684 & 8.4e-6\\ \hline	
				CSA2 & 9.45 &  -0.5896&   2.1e-5 \\ \hline  
				PDSG\_adp & 6.93 & -0.9992 &  7.3e-6 \\ \hline
		\end{tabular}}  
		
		\subfloat[FF100, $(N,n)=(25251,100)$]{
			\begin{tabular}{|c|c|c|c|} \hline
				method &  time(s) & optimal value & averaged constraint violation \\ \hline
				RMALM & {\bf 10.60} &  {\bf 5.1800} & {\bf 4.1e-6}\\ \hline    %
				ApriD &  42.43 & 5.9690 & 4.4e-6 \\ \hline	
				MSA &  19.99 & 15.4154 &  {\bf 4.1e-6}\\ \hline	%
				CSA1 &   25.85 &  24.3529 & 4.8e-6 \\ \hline	
				CSA2 &    25.85&    20.5283&  7.9e-6  \\ \hline	%
				PDSG\_adp & 18.87 & 5.3721 &  5.4e-6\\ \hline   %
		\end{tabular}} 
	\end{center}
\end{table}

	


\section{Conclusions}
\label{conclusion}

We present a hybrid method of stochastic approximation technique and augmented Lagrangian method for constrained stochastic convex optimization. The complexity is shown to be comparable with the existing related stochastic methods. Numerical experiments also demonstrate superiority in comparison with the first-order stochastic methods. Thus, both theoretical and numerical results suggest that the proposed algorithm is efficient for solving stochastic convex optimization with hard projection constraints. Our algorithm can also be extended to solve online constrained problems with determinate constraints. However, there are still several important issues to be studied. Firstly, our algorithm currently considers stochastic convex optimization with determinate constraints. Secondly, the convergence analysis of RMALM is guaranteed by the strong concavity of the dual essential objective function, and we will further consider weakening this assumption. Another interesting topic is how to use the techniques in this paper to deal with nonconvex constrained stochastic optimization, such as training neural networks with constraints.  



\bibliographystyle{siamplain}
\bibliography{RMALM_main}

}
\end{document}